\numberwithin{equation}{section}
\newtheorem{theorem}[equation]{Theorem}
\newtheorem{proposition}[equation]{Proposition}
\newtheorem{corollary}[equation]{Corollary}
\newtheorem{conjecture}[equation]{Conjecture}
\newtheorem{problem}[equation]{Problem}
\theoremstyle{definition}
\newtheorem{rmk}[equation]{Remark}
\newenvironment{remark}[1][]{\begin{rmk}[#1] \pushQED{\qed}}{\popQED \end{rmk}}
\newtheorem{eg}[equation]{Example}
\newenvironment{example}[1][]{\begin{eg}[#1] \pushQED{\qed}}{\popQED \end{eg}}
\newtheorem{defn}[equation]{Definition}
\renewcommand{\thesubsection}{%
  \ifnum\c@subsection<1 \@arabic\c@section
  \else \thesection.\@arabic\c@subsection
  \fi
}
\newcommand{\bA}{\mathbf{A}}
\newcommand{\bC}{\mathbf{C}}
\newcommand{\rD}{\mathrm{D}}
\newcommand{\rH}{\mathrm{H}}
\newcommand{\bM}{\mathbf{M}}
\newcommand{\bP}{\mathbf{P}}
\newcommand{\bS}{\mathbf{S}}
\newcommand{\fS}{\mathfrak{S}}
\newcommand{\bZ}{\mathbf{Z}}
\newcommand{\bk}{\mathbf{k}}
\renewcommand{\phi}{\varphi}
\newcommand{\ul}[1]{\underline{#1}}
\newcommand{\arxiv}[1]{\href{http://arxiv.org/abs/#1}{{\tt arXiv:#1}}}
\def\Ddots{\mathinner{\mkern1mu\raise\p@
\vbox{\kern7\p@\hbox{.}}\mkern2mu
\raise4\p@\hbox{.}\mkern2mu\raise7\p@\hbox{.}\mkern1mu}}
\DeclareMathOperator{\reg}{reg}
\DeclareMathOperator{\Sym}{Sym}
\DeclareMathOperator{\Tor}{Tor}
\DeclareMathOperator{\pdim}{pdim}
\newcommand{\GL}{\mathbf{GL}}
\title{On some modules supported in the Chow variety}
\author{Claudiu Raicu}
\address{Department of Mathematics, University of Notre Dame, 255 Hurley, Notre Dame, IN 46556 USA\newline
\indent Institute of Mathematics ``Simion Stoilow'' of the Romanian Academy}
\email{craicu@nd.edu}
\thanks{CR was supported by NSF DMS-1901886.}
\author{Steven V Sam}
\address{Department of Mathematics, University of California, San Diego, CA USA}
\email{ssam@ucsd.edu}
\thanks{SS was supported by NSF DMS-1849173.}
\author{Jerzy Weyman}
\address{Department of Mathematics, Jagiellonian University, Krak\'ow, Poland}
\email{jerzy.weyman@gmail.com}
\thanks{JW was supported by MAESTRO NCN-UMO-2019/34/A/ST1/00263
and NAWA POWROTy -PPN/PPo/2018/1/00013/U/00001 grants, as well as by NSF DMS 1802067}
\date{August 24, 2021}
\dedicatory{Dedicated to Bernd Sturmfels on the occasion of his 60th birthday.}
\newcommand{\des}{\mathrm{des}}
\newcommand{\maj}{\mathrm{maj}}
\def\lra{\longrightarrow}
\newcommand{\defi}[1]{{\bf \upshape\sffamily #1}}
\newcommand{\op}[1]{\operatorname{#1}}
\newcommand{\mc}[1]{\mathcal{#1}}
\newcommand{\mf}[1]{\mathfrak{#1}}
\renewcommand{\a}{\alpha}
\newcommand{\chr}{\operatorname{char}}
\newcommand{\Ext}{\operatorname{Ext}}
\newcommand{\Hom}{\operatorname{Hom}}
\newcommand{\onto}{\twoheadrightarrow}
\newcommand{\oo}{\otimes}
\begin{document}

\maketitle

\begin{abstract} The study of Chow varieties of decomposable forms lies at the confluence of algebraic geometry, commutative algebra, representation theory and combinatorics. There are many open questions about homological properties of Chow varieties and interesting classes of modules supported on them. The goal of this note is to survey some fundamental constructions and properties of these objects, and to propose some new directions of research. Our main focus will be on the study of certain maximal Cohen--Macaulay modules of covariants supported on Chow varieties, and on defining equations and syzygies. We also explain how to assemble $\Tor$ groups over Veronese subalgebras into modules over a Chow variety, leading to a result on the polynomial growth of these groups.
\end{abstract}

\section{Introduction}

In this article we collect several results on graded modules supported in the Chow variety~$Y_{d,n}$, defined as the set of homogeneous polynomials of degree $d$ in $n+1$ variables which decompose into a product of linear factors. Our interest is in the study of certain Cohen--Macaulay modules of covariants from a homological and representation theoretic perspective, analyzing the shape of their minimal resolution and their equivariant structure. We also include a discussion of Brill equations, rank varieties, and the study of $Y_{d,n}$ for small values of the parameters, as well as a general polynomiality statement for $\Tor$ groups of Veronese subalgebras. Our hope is to give a flavor of the questions and results surrounding Chow varieties, that will spark future research in this area.

Let $\bk$ be an algebraically closed field, and let $U\simeq\bk^{n+1}$ be an $(n+1)$-dimensional vector space. Thinking of $U^*$ as the set of linear forms in $n+1$ variables, we can identify the space $\Sym^d(U^*)$ with the affine variety $X_{d,n}$ of homogeneous polynomials of degree $d$ in $n+1$ variables. We consider the subvariety of \defi{completely decomposable forms} in $X_{d,n}$, defined as
\begin{equation}\label{eq:def-Ydn}
 Y_{d,n} = \{ F \in X_{d,n} : F = \ell_1\cdots\ell_d,\text{ with }\ell_i\in U^*\}.
\end{equation}
An equivalent description of $Y_{d,n}$ is as the affine cone over the \defi{Chow variety} of $0$-dimensional cycles of length $d$ in $\bP^n$. The standard parametrization of $Y_{d,n}$ (using the tuples $(\ell_1,\dots,\ell_d)$) realizes the coordinate ring $\bk[Y_{d,n}]$ as a subalgebra of $A_{d,n}$, the homogeneous coordinate ring of the Segre product $\left(\bP^n\right)^{\times d}$. There is a natural action of the symmetric group $\mf{S}_d$ on $A_{d,n}$, and the invariant subring $B_{d,n}=A_{d,n}^{\mf{S}_d}$ gives the normalization of $\bk[Y_{d,n}]$. Moreover, the normalization map has an interesting connection to Foulkes' Conjecture about plethysm coefficients in algebraic combinatorics (see Section~\ref{s:char-free}). 

More generally, for every partition $\lambda$ of $d$ we can construct the module of covariants
\[
  M_{\lambda}= \Hom_{\mf{S}_d}(V_{\lambda},A_{d,n}),
\]
where $V_\lambda$ is the corresponding irreducible $\fS_d$-representation.

In characteristic zero, each $M_{\lambda}$ can be thought of as a maximal Cohen--Macaulay module supported on $Y_{d,n}$. In Section~\ref{sec:mods-Ydn} we discuss the action of the duality functor on these modules, as well as some consequences regarding the shape of their minimal resolution (over the coordinate ring of $X_{d,n}$). We analyze the case $d=2$, which corresponds to modules supported on rank $\leq 2$ matrices, as well as the case $n=1$. Notice that $Y_{d,1}$ is an affine space, so the modules $M_{\lambda}$ are free in this case, but the description of their generators involves interesting combinatorics related to the well-studied statistics of \defi{descents} and \defi{major indices}, and to Kostka--Foulkes polynomials. One way to interpret the formulas in the case $n=1$ is as generalizations of Hermite Reciprocity, which states that, for all $a,b \ge 0$, we have an isomorphism $\Sym^a(\Sym^b U) \cong \Sym^b(\Sym^a U)$ as $\GL(U)$-representations when $\dim(U)=2$. For general parameters $d,n$, we take the first step in the study of the syzygies of $M_{\lambda}$ by establishing a bound on their Castelnuovo--Mumford regularity.

In Section~\ref{s:char-zero} we analyze the Brill equations and recall computational results about their relation to defining ideals of $Y_{d,n}$.
In particular, we recall why the ideals generated by Brill equations are not radical for $n\ge d\ge 3$. This is due to certain ``rank equations''
of the same degree but different representation type that also vanish on $X_{d,n}$ for $d\ge n\ge 3$. We also treat in detail the normalization of the Chow variety $X_{3,2}$ 
and point out some connections to classical results on ternary cubics going back to Aronhold.
 
Finally, in Section~\ref{sec:Hermite} we prove a result about the finite generation of the $\Tor$ modules in the spirit of stability in representation theory, and describe an algebraic version of a polynomiality result for $\Tor$ groups by Yang.

\section{The variety of decomposable forms} \label{s:char-free}

In this section we assume that $\bk$ is an algebraically closed field of arbitrary characteristic. We recall some of the basic properties of $Y_{d,n}$, discussing its normalization map, along with some connections to Foulkes' conjecture. We begin with a natural parametrization of $Y_{d,n}$, obtained by considering the variety
\begin{subequations}
  \begin{equation}\label{eq:def-Zdn}
  Z_{d,n}=\lbrace (\ell_1,\ell_2,\ldots ,\ell_d)\ |\ \ell_1 ,\ldots ,\ell_d\in U^*\rbrace
 \end{equation}
of $d$-tuples of linear forms, along with the multiplication map
\begin{equation}\label{eq:param-Ydn}
  \mu_{d,n} \colon Z_{d,n}\lra Y_{d,n}\subseteq X_{d,n}, \qquad (\ell_1,\ldots ,\ell_d) \mapsto \ell_1\cdots \ell_d.
\end{equation}
If we consider the associated pull-back map
\begin{equation}\label{eq:def-mu-sharp}
 \mu_{d,n}^{\sharp}\colon \bk[X_{d,n}] \lra \bk[Z_{d,n}],
\end{equation}
then the coordinate ring of $Y_{d,n}$ can be identified as
\[ \bk[Y_{d,n}] = \op{Im}\left(\mu_{d,n}^{\sharp}\right).\]
If we let $J_{d,n}$ denote the defining ideal of $Y_{d,n}$ then we have
\begin{equation}\label{eq:def-Jdn}
 J_{d,n} = \ker\left(\mu_{d,n}^{\sharp}\right).
\end{equation}
\end{subequations}
The group $\GL(U)$ of invertible transformations of $U$ acts on all the varieties described so far, making their coordinate rings into representations of $\GL(U)$. We have for instance that 
\[\bk[X_{d,n}] = \Sym(\rD^d U) = \bigoplus_{m\geq 0} \Sym^m(\rD^d U),\]
where $\rD^d U = \Sym^d(U^*)^*$ is the \defi{$d$-th divided power} of $U$, and 
\[\bk[Z_{d,n}] = \Sym(U \oplus\cdots \oplus U) = \Sym(U)^{\oo d} = \bigoplus_{a_1,\dots ,a_d\geq 0} \Sym^{a_1}U \otimes \cdots \otimes \Sym^{a_d} U.\]
Notice that the identification above provides $\bk[Z_{d,n}]$ with a natural $\bZ^d$-grading, which can be seen as coming from the action of the torus $(\bk^*)^{\times d}$ by scaling on each tensor factor. There is also an action of the symmetric group $\mf{S}_d$ on $\Sym(U)^{\oo d}$ by permutations of the tensor factors, and the morphism $\mu_{d,n}^{\sharp}$ is induced by the natural inclusion
\begin{equation}\label{eq:phi-sharp-gens}
 \rD^d(U) \hookrightarrow \bk[Z_{d,n}]_{(1,\dots,1)}=U^{\oo d}
\end{equation}
as the subspace of symmetric tensors. From now on we write
\[ R_{d,n} = \bk[X_{d,n}],\text{ and }A_{d,n} = \bigoplus_{m\geq 0} \bk[Z_{d,n}]_{(m,\dots,m)} = \bigoplus_{m\geq 0}\Sym^m U \oo \cdots \oo \Sym^m U,\]
noting that $A_{d,n} = R_{d,n}^T$ is a ring of invariants for the action of the torus
\[
  T = \{(t_1,\dots,t_d)\in(\bk^*)^{\times d} : t_1\cdots t_d=1\},
\]
and in particular that $A_{d,n}$ is normal. Since the image of $\mu_{d,n}^{\sharp}$ is generated by $\mf{S}_d$-invariant elements, it follows that
\[ \bk[Y_{d,n}] \subseteq A_{d,n}^{\mf{S}_d} = \bigoplus_{m\geq 0} \rD^d\left(\Sym^m U\right) =: B_{d,n},\]
where $B_{d,n}$ is also normal. We will prove the following (see also \cites{brion,neeman,nag}).

\begin{theorem}\label{thm:normalization-Ydn}
 The algebra $B_{d,n}$ is the normalization of $\bk[Y_{d,n}]$.
\end{theorem}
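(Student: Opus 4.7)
The plan is to verify three properties of the inclusion $\bk[Y_{d,n}]\hookrightarrow B_{d,n}$: that $B_{d,n}$ is a normal domain, that it is finite (equivalently integral) over $\bk[Y_{d,n}]$, and that the induced morphism $\Spec(B_{d,n})\to Y_{d,n}$ is birational. Taken together, these three facts identify $B_{d,n}$ with the integral closure of $\bk[Y_{d,n}]$ in its fraction field.

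Normality is essentially formal: $\bk[Z_{d,n}]$ is a polynomial ring, hence normal, and passing to invariants under any group action preserves normality (an element of the fraction field of the invariants that is integral over the invariants is integral over the ambient ring and hence lies in it by normality, and is automatically invariant). This applies to $A_{d,n}=\bk[Z_{d,n}]^T$ and then to $B_{d,n}=A_{d,n}^{\mf{S}_d}$. Birationality is equally easy: over a generic $F=\ell_1\cdots\ell_d\in Y_{d,n}$ with pairwise non-proportional nonzero $\ell_i$, unique factorization in $\Sym(U^*)$ forces $\mu_{d,n}^{-1}(F)\subseteq Z_{d,n}$ to be a single $(T\rtimes\mf{S}_d)$-orbit, which collapses to one point in $\Spec(B_{d,n})$. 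Hence $\Spec(B_{d,n})\to Y_{d,n}$ is a bijection on a dense open subset.

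The main work, and what I expect to be the real obstacle, is finiteness. I plan to prove the stronger statement that $A_{d,n}$ itself is a finite $\bk[Y_{d,n}]$-module; finiteness of $B_{d,n}\subseteq A_{d,n}$ then follows because $\bk[Y_{d,n}]$ is Noetherian. Since $A_{d,n}$ is a finitely generated graded $\bk$-algebra with $(A_{d,n})_0=\bk$ and likewise for $\bk[Y_{d,n}]$, graded Nakayama reduces the question to showing that $\sqrt{\bk[Y_{d,n}]_+\cdot A_{d,n}}=(A_{d,n})_+$, i.e.\ that the fiber of $\Spec(A_{d,n})\to Y_{d,n}$ over the origin is zero-dimensional. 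Interpreting $\Spec(A_{d,n})$ as the GIT quotient $Z_{d,n}/\!\!/T$, the set-theoretic preimage of $0\in Y_{d,n}$ in $Z_{d,n}$ equals $\bigcup_{i=1}^d\{\ell_i=0\}$. The key observation is that for every such tuple the origin $0\in Z_{d,n}$ lies in the closure of its $T$-orbit: if $\ell_i=0$, then for any $j\ne i$ the one-parameter subgroup with $t_j=s$, $t_i=s^{-1}$, and the remaining $t_k=1$ sends $\ell_j\mapsto s\ell_j\to 0$ as $s\to 0$ without disturbing the already-zero $i$-th coordinate, and iterating drives all coordinates to zero. Consequently every orbit in the preimage has $0$ in its closure, so the entire preimage maps to the single point $[0]\in\Spec(A_{d,n})$ in the GIT quotient, which is exactly what graded Nakayama requires.

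Combining the three ingredients, $B_{d,n}$ is a normal domain, finite (hence integral) over $\bk[Y_{d,n}]$, and shares its fraction field with $\bk[Y_{d,n}]$; therefore $B_{d,n}$ is the integral closure of $\bk[Y_{d,n}]$ in its fraction field, i.e.\ its normalization.
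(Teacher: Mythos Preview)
Your overall strategy matches the paper's: establish normality of $B_{d,n}$, finiteness of $B_{d,n}$ over $\bk[Y_{d,n}]$, and birationality. Your finiteness argument is correct and in fact more self-contained than the paper's. Both reduce via graded Nakayama to showing that the ideal generated by the $U_{\ul{a}}$ has finite colength in $A_{d,n}$, but the paper then cites \cite[Proposition~5.2]{ES-Boij} for the statement that these forms have no common zero on the Segre variety, whereas your orbit-closure argument handles this directly: any tuple with some $\ell_i=0$ has $0$ in the closure of its $T$-orbit, so every $T$-invariant vanishes there.

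The gap is in the birationality step, and only in positive characteristic. From ``$\Spec(B_{d,n}) \to Y_{d,n}$ is a bijection on a dense open'' you conclude that the two rings share a fraction field, but this implication fails when $\chr(\bk)=p>0$: the Frobenius $\bA^1 \to \bA^1$ is a finite bijection that is not birational. Your fiber count shows only that the \emph{separable} degree $[\op{Frac}(B_{d,n}) : \op{Frac}(\bk[Y_{d,n}])]_{\mathrm{sep}}$ equals $1$; a nontrivial purely inseparable extension is not excluded by the argument. Since Section~\ref{s:char-free} explicitly works in arbitrary characteristic, this is a genuine hole. The paper avoids it by bounding field degrees directly: with $K=\op{Frac}(A_{d,n})$ and $F=\op{Frac}(\bk[Y_{d,n}])$, one has $[K:K^{\fS_d}]=d!$, so it suffices to prove $[K:F]\le d!$. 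After dehomogenizing by $U_{\ul{0}}$, the elements $V_{(1^k,0^{d-k})}\in F$ are exactly the elementary symmetric polynomials in $v_1^{(1)},\dots,v_1^{(d)}$, so $E=F(v_1^{(1)},\dots,v_1^{(d)})$ satisfies $[E:F]\le d!$; an explicit nonvanishing determinant then shows $E=K$. This computation is characteristic-free. In characteristic zero your argument is already complete.
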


Before explaining the proof of Theorem~\ref{thm:normalization-Ydn}, we proceed with a more concrete description of the constructions above. We choose a basis $x_0,\dots,x_n$ for $U^*$, which in turn induces a monomial basis
\[
  \{x^{\a} = x_0^{\a_0}\cdots x_n^{\a_n} : \a_0+\cdots+\a_n = d\}
\]
for $\Sym^d(U^*)$.
If we let $\{z_{\a}\}_{\a}$ denote the dual basis of $\rD^d(U)$, then we can think of its elements as the coefficients of the generic $d$-form
\[
  \sum_{|\a|=d} z_{\a}\cdot x^{\a},
\]
and make the identification $R_{d,n} = \bk[z_{\a}]$. For $1\leq i\leq d$, we write $u^{(i)}_j$, $j=0,\dots,n$, for the coordinate functions on the affine space $U$ parametrizing the forms $\ell_i$ in \eqref{eq:def-Zdn}. We get an identification
\[ \bk[Z_{d,n}] = \bk[u^{(i)}_j],\]
and the morphism $\mu_{d,n}^{\sharp}$ sends $z_{\a}$ to the coefficient of $x^{\a}$ in the expansion of
\begin{equation}\label{eq:generic-prod-lin-forms}
\prod_{i=1}^d\left(\sum_{j=0}^n u^{(i)}_j\cdot x_j\right).
\end{equation}

\begin{example}\label{ex:d=2-n=2}
 Suppose that $d=2$ and $n=2$, so \eqref{eq:generic-prod-lin-forms} becomes
 \[ \left(u^{(1)}_0\cdot x_0+u^{(1)}_1\cdot x_1+u^{(1)}_2\cdot x_2\right)\cdot \left(u^{(2)}_0\cdot x_0+u^{(2)}_1\cdot x_1+u^{(2)}_2\cdot x_2\right).\]
The morphism $\mu_{2,2}^{\sharp}$ sends
\[z_{(2,0,0)}\mapsto u^{(1)}_0\cdot u^{(2)}_0,\quad z_{(0,2,0)}\mapsto u^{(1)}_1\cdot u^{(2)}_1,\quad z_{(0,0,2)}\mapsto u^{(1)}_2\cdot u^{(2)}_2,\ \]
\[z_{(1,1,0)}\mapsto u^{(1)}_0\cdot u^{(2)}_1 + u^{(1)}_1\cdot u^{(2)}_0,\quad z_{(1,0,1)}\mapsto u^{(1)}_0\cdot u^{(2)}_2 + u^{(1)}_2\cdot u^{(2)}_0,\quad z_{(0,1,1)}\mapsto u^{(1)}_1\cdot u^{(2)}_2 + u^{(1)}_2\cdot u^{(2)}_1.\] 
If $\chr(\bk)\neq 2$ then $Y_{2,2}$ parametrizes quadrics of rank $\leq 2$. With the usual identification between quadratic forms and symmetric matrices, we get that $J_{2,2}$ is a principal ideal, generated by the determinant of
\[
Z = \begin{bmatrix}
2z_{(2,0,0)} & z_{(1,1,0)} & z_{(1,0,1)} \\
z_{(1,1,0)} & 2z_{(0,2,0)} & z_{(0,1,1)} \\
z_{(1,0,1)} & z_{(0,1,1)} & 2z_{(0,0,2)}
\end{bmatrix}.
\]
If $\chr(\bk)=2$ then $J_{2,2}$ is still principal, generated by
\[z_{(2,0,0)}z_{(0,1,1)}^2 + z_{(0,2,0)}z_{(1,0,1)}^2 + z_{(0,0,2)}z_{(1,1,0)}^2 + z_{(0,1,1)}z_{(1,0,1)}z_{(1,1,0)}.\]
In both cases, $Y_{2,2}$ is a hypersurface of degree $3$ in $X_{2,2}\simeq \bA^6$. The singular locus $Y_{2,2}^{\rm sing}$ has codimension two in $Y_{2,2}$ (given by the quadrics of rank $\leq 1$ if $\chr(\bk)\neq 2$, and by the $3$-plane $z_{(0,1,1)}=z_{(1,0,1)}=z_{(1,1,0)}=0$ if $\chr(\bk)=2$), hence $Y_{2,2}$ is normal and $B_{2,2}=\bk[Y_{2,2}]$.
\end{example}

It will be useful to denote
\[
  u_{\ul{b}} = \prod_{i=1}^d u_{b_i}^{(i)},\text{ for }\ul{b}=(b_1,\cdots,b_d)\in \{0,\dots,n\}^{\times d},
\]
so the expansion of \eqref{eq:generic-prod-lin-forms} is given by
\[
  \sum_{b_1,\cdots,b_d=0}^n u_{(b_1,\dots,b_d)} x_{b_1}\cdots x_{b_d}.
\]
We consider the $\mf{S}_d$-action on the cartesian product $\{0,\dots,n\}^{\times d}$, write $O_{\ul{b}}$ for the orbit of $\ul{b}$, and write $\ul{a}\sim\ul{b}$ if $\ul{a}\in O_{\ul{b}}$. Note that every orbit $O_{\ul{b}}$ has a unique representative $\ul{a}\sim\ul{b}$ with $n\geq a_1\geq \cdots \geq a_d\geq 0$. If we let
\begin{equation}\label{eq:def-Ua}
 U_{\ul{a}} = \sum_{\ul{b}\sim\ul{a}} u_{\ul{b}}
\end{equation}
then we get that $\bk[Y_{d,n}]$ is the $\bk$-algebra generated by the forms $U_{\ul{a}}$.

\begin{proposition}\label{prop:finite-over-Ydn}
 The algebras $A_{d,n}$ and $B_{d,n}$ are finitely generated $\bk[Y_{d,n}]$-modules.
\end{proposition}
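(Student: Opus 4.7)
The plan is to reduce the statement to a concrete fiber computation and apply graded Nakayama. Both $A_{d,n}$ and $\bk[Y_{d,n}]$ are finitely generated, positively graded, connected $\bk$-algebras, and the inclusion $\bk[Y_{d,n}]\hookrightarrow A_{d,n}$ is a graded map: the degree-one generators $z_{\alpha}\in R_{d,n}$ land in the multi-degree $(1,\dots,1)$ component of $\bk[Z_{d,n}]$, which is precisely the $m=1$ piece of $A_{d,n}$. By graded Nakayama, it suffices to show that $A_{d,n}/\bk[Y_{d,n}]_{+}\cdot A_{d,n}$ is finite-dimensional over $\bk$. Since $A_{d,n}$ is a finitely generated $\bk$-algebra, this in turn reduces to showing that the fiber of $\Spec(A_{d,n})\to\Spec(\bk[Y_{d,n}])$ over the origin consists set-theoretically of the origin of $\Spec(A_{d,n})$ alone.

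The main step, and the one I expect to require the most care, is this fiber computation, which I will carry out in the language of torus orbits. Since $A_{d,n}=\bk[Z_{d,n}]^{T}$ with $T$ reductive, the closed points of $\Spec(A_{d,n})=Z_{d,n}/\!\!/T$ correspond bijectively to closed $T$-orbits in $Z_{d,n}$, and a closed orbit lying above the origin of $Y_{d,n}$ must be contained in $\mu_{d,n}^{-1}(0)=\bigcup_{i=1}^{d} V(\ell_i)$. I claim that the origin is the unique such closed orbit. Given $(\ell_1,\dots,\ell_d)\in\mu_{d,n}^{-1}(0)$ that is not the zero tuple, let $S=\{j:\ell_j\neq 0\}$, so $1\le|S|\le d-1$. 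Consider the one-parameter subgroup $\lambda\colon\bk^{*}\to T$ with weights $a_j=d-|S|$ for $j\in S$ and $a_j=-|S|$ for $j\notin S$; these are integers summing to zero, so $\lambda$ indeed lands in $T$. Then $\lim_{t\to 0}\lambda(t)\cdot(\ell_1,\dots,\ell_d)=0$, since the coordinates in $S$ are scaled by the positive power $t^{d-|S|}\to 0$ while those outside $S$ are already zero. Hence the orbit of $(\ell_1,\dots,\ell_d)$ is not closed, confirming the claim.

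Combining these observations, $A_{d,n}/\bk[Y_{d,n}]_{+}A_{d,n}$ is supported only at the origin; being a finitely generated graded $\bk$-algebra whose irrelevant ideal is nilpotent, it is finite-dimensional over $\bk$, and graded Nakayama yields that $A_{d,n}$ is a finite $\bk[Y_{d,n}]$-module. No separate argument is needed for $B_{d,n}$: it sits as a graded $\bk[Y_{d,n}]$-submodule of $A_{d,n}$, and since $\bk[Y_{d,n}]$ is Noetherian with $A_{d,n}$ finitely generated over it, $B_{d,n}$ is finitely generated as well.
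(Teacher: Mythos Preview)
Your argument is correct. Both your proof and the paper's begin with the same reduction via graded Nakayama to showing that the ideal $\bk[Y_{d,n}]_{+}A_{d,n}$ has finite colength, and both handle $B_{d,n}$ by the same Noetherian submodule observation. The difference lies in how the colength statement is verified. The paper passes to $\op{Proj}(A_{d,n})=(\bP^n)^{\times d}$ and argues that the generators $U_{\ul{a}}$ of $\bk[Y_{d,n}]$ have no common zero on this Segre product, invoking a result of Eisenbud--Schreyer that in fact already $(dn+1)$ specific linear combinations of the $U_{\ul{a}}$ suffice. You instead work on the affine side, using the identification $\Spec(A_{d,n})=Z_{d,n}/\!\!/T$ and the Hilbert--Mumford style one-parameter subgroup $\lambda$ to show directly that any nonzero tuple with a vanishing factor lies in a non-closed $T$-orbit. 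Your route is self-contained (no external citation needed) and makes transparent why the torus quotient structure forces the fiber to collapse; the paper's route, on the other hand, records the sharper quantitative fact that far fewer equations already cut out the irrelevant locus on the Segre product. Both compute the same fiber from dual viewpoints, projective versus GIT.
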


\begin{proof}
 Since $B_{d,n}\subseteq A_{d,n}$, it is enough to prove the result for $A_{d,n}$. Using the graded Nakayama lemma, it suffices to prove that the forms $U_{\ul{a}}$ generate an ideal of finite colength in $A_{d,n}$. Since $A_{d,n}$ is the coordinate ring of a $d$-fold Segre product $\op{Seg}=(\bP^n)^{\times d}$, this is further equivalent to checking that (after passing to an algebraic closure of $\bk$) the forms in \eqref{eq:def-Ua} have no common zeros on $\op{Seg}$. This follows from \cite[Proposition~5.2]{ES-Boij}, where it is shown that in fact the forms
 \[ \sum_{|\ul{a}|=\ell} U_{\ul{a}},\text{ with }0\leq\ell\leq dn,\]
have no common zeroes on $\op{Seg}$.
\end{proof}

\begin{proof}[Proof of Theorem~\ref{thm:normalization-Ydn}]
 Since $B_{d,n}$ is normal and finite over $\bk[Y_{d,n}]$ by Proposition~\ref{prop:finite-over-Ydn}, it suffices to show that the inclusion is birational, that is, the function field $F=\bk(Y_{d,n})$ is equal to the fraction field $\op{Frac}(B_{d,n})$. We let $K=\op{Frac}(A_{d,n})$ and observe that $\mf{S}_d$ acts faithfully on $K$, hence $\op{Frac}(B_{d,n}) = K^{\mf{S}_d}$ satisfies
 \[ \left[ K : \op{Frac}(B_{d,n}) \right] = d!.\]
 Since $F\subseteq \op{Frac}(B_{d,n})$, in order to prove equality it suffices to check that $\left[ K:F \right] \leq d!$, which we do next. If we let 
\[t=\prod_{i=1}^d u^{(i)}_0,\text{ and }v^{(i)}_j = \frac{u^{(i)}_j}{u^{(i)}_0}\text{ for }1\leq i\leq d,\ 1\leq j\leq n,\]
then $K = \bk(t,v^{(i)}_j)$ is a field of rational functions in $dn+1$ independent variables. We note that $t=u_{\ul{0}}= U_{\ul{0}} \in \bk[Y_{d,n}]$, and let $V_{\ul{a}}=U_{\ul{a}}/U_{\ul{0}}$, so that $F= \bk(t,V_{\ul{a}})$. After dividing \eqref{eq:generic-prod-lin-forms} by $t$, we get that $V_{\ul{a}}$ are the coefficients in the expansion
\begin{equation}\label{eq:dehomog-prod-lin-forms}
\prod_{i=1}^d\left(x_0 + \sum_{j=1}^n v^{(i)}_j\cdot x_j\right).
\end{equation}
Observe that
\[
  V_{(1^k,0^{d-k})} = e_k(v^{(1)}_1,\dots,v^{(d)}_1),\ 0\leq k\leq d,
\]
are the elementary symmetric polynomials in $v^{(1)}_1,\dots,v^{(d)}_1$, so for $n=1$ we have $F=K^{\mf{S}_d}$ by the fundamental theorem of symmetric polynomials. If $n\geq 2$ then we consider the intermediate extension $F\subseteq E \subseteq K$ defined by
\[
  E = F\left(v_1^{(1)},\dots,v_1^{(d)}\right),
\]
and note that since $F$ contains all $e_k(v^{(1)}_1,\dots,v^{(d)}_1)$, we must have $[E:F] \leq d!$. To finish the proof, we will show that $E=K$, or equivalently, that each $v^{(i)}_j\in E$ when $j\geq 2$. We consider the elements of $E$ defined by
\[
  e^{(i)}_k = e_k(v^{(1)}_1,\dots,\widehat{v^{(i)}_1},\dots,v^{(d)}_1) \text{ for }1\leq i\leq d,\ 0\leq k\leq d-1,
\]
where $e_k$ denotes an elementary symmetric polynomial, and $\widehat{\bullet}$ denotes a missing term. We consider the matrix $M$ whose entry in row $k+1$ and column $i$ is $e^{(i)}_k$. If we fix $j\geq 2$ and consider the coefficients of $x_j x_0^{d-1},x_j x_0^{d-2}x_1,\dots,x_j x_1^{d-1}$ in \eqref{eq:dehomog-prod-lin-forms}, then we obtain an identity
\[
  M \cdot \begin{bmatrix} v^{(1)}_j \\ v^{(2)}_j \\ \vdots \\ v^{(d)}_j \end{bmatrix} = \begin{bmatrix} V_{(j,0^{d-1})} \\ V_{(j,1,0^{d-2})} \\ \vdots \\ V_{(j,1^{d-1})} \end{bmatrix} \in F^d \subseteq E^d.
\]
Since $M$ has entries in $E$, in order to prove that $v^{(i)}_j\in E$ it suffices to show that $\det(M)\neq 0$. This follows from the fact that $\det(M)$ is a homogeneous polynomial in $v^{(i)}_j$ of degree $d\choose 2$, and the coefficient of the term
\[ \left(v^{(1)}_1\right)^{d-1} \left(v^{(2)}_1\right)^{d-2}\cdots \left(v^{(d-1)}_1\right)\]
is equal to one (this term arises in a unique way from expanding the product of the entries on the main diagonal of $M$). 
\end{proof}

\begin{remark}\label{rem:normality}
 The polynomials $V_{\ul{a}}$ in the proof of Theorem~\ref{thm:normalization-Ydn} are the \defi{elementary multi-symmetric polynomials} in the $d$ sets of variables $v^{(i)}_\bullet$, $i=1,\dots,d$. In characteristic zero, they generate the algebra of multi-symmetric polynomials $\bk[v^{(i)}_j]^{\mf{S}_d}$ (see \cite{schlafli}, \cite{noether}, \cite[Section~II,~Chapter~I]{macmahon}, \cite[Chapter~II,~Section~A.3]{weyl}), but this is no longer the case in positive characteristic \cites{nag,neeman,dalbec,briand,rydh}. This was used to show that the projective Chow variety $\op{Proj}(\bk[Y_{d,n}])$ is normal in characteristic zero, and that it usually fails to be normal in positive characteristic \cites{brion,neeman}.
\end{remark}

We can now write down an exact sequence
\[
  0\lra J_{d,n} \lra R_{d,n} \xrightarrow{\phi_{d,n}}  B_{d,n}\lra C_{d,n}\to 0,
\]
where $C_{d,n} = B_{d,n}/\bk[Y_{d,n}]$, and $\phi_{d,n}$ is the map $\mu^{\sharp}_{d,n}$ from \eqref{eq:def-mu-sharp}. Remark~\ref{rem:normality} implies that $C_{d,n}$ has finite length in characteristic zero, but in general not in positive characteristic, and it would be interesting to investigate the module $C_{d,n}$ further. The degree $m$ component of the map $\phi_{d,n}$ is
\[\phi_{d,n,m} \colon \Sym^m(\rD^d U)\to \rD^d(\Sym^m U),\]
and is usually referred to as the \defi{Foulkes--Howe map}, or the \defi{Hermite--Hadamard--Howe map} (see also \cites{howe} and the survey \cite{Landsberg}). The condition that $C_{d,n}$ has finite length is equivalent to the surjectivity of $\phi_{d,n,m}$ for $m\gg 0$. It remains an open problem to determine $m_0(d,n)$, the smallest value for which $\phi_{d,n,m}$ is surjective when $m\geq m_0(d,n)$, although an effective bound in characteristic zero was found by Brion \cite{brion-french}. Another question that we don't know the answer to is whether $B_{d,n}$ is Cohen--Macaulay in positive characteristic (in characteristic zero, $B_{d,n}$ is a ring of invariants for the reductive group $\fS_d \ltimes T$, hence it has in fact rational singularities by a theorem of Boutot \cite{boutot}). We will return to discussing more characteristic zero examples regarding the defining equations of $Y_{d,n}$ in Section~\ref{s:char-zero}. 

One important motivation for the study of the maps $\phi_{d,n,m}$ comes from the following.

\begin{conjecture}[\cite{foulkes}]\label{conj:foulkes}
  If $\chr(\bk)=0$ then for any $d\ge m$ the $\GL(U)$-representation $\Sym^m(\Sym^d U)$ is isomorphic to a subrepresentation of $\Sym^d (\Sym^m U)$.
\end{conjecture}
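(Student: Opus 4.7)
The plan is to deduce Foulkes' conjecture from the Foulkes--Howe conjecture, i.e.\ from the injectivity of the map $\phi_{d,n,m}\colon\Sym^m(\rD^d U)\to\rD^d(\Sym^m U)$ introduced after Theorem~\ref{thm:normalization-Ydn}. In characteristic zero $\rD^k$ and $\Sym^k$ are canonically isomorphic, so $\phi_{d,n,m}$ becomes a $\GL(U)$-equivariant arrow
\[ \phi_{d,n,m}\colon \Sym^m(\Sym^d U)\lra \Sym^d(\Sym^m U); \]
since both source and target are semisimple $\GL(U)$-modules, injectivity of $\phi_{d,n,m}$ would realize $\Sym^m(\Sym^d U)$ as a subrepresentation of $\Sym^d(\Sym^m U)$ and prove the conjecture. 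The statement stabilizes in $n$, so it suffices to treat a single $n$ large enough (e.g.\ $n+1\ge \dim_\bk\Sym^d U$) that all irreducible constituents appear faithfully.

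Granting the reformulation, the first step is to decompose both sides into Schur components and analyze $\phi_{d,n,m}$ isotypic-by-isotypic: on the isotypic component of type $\lambda$, Schur's lemma reduces $\phi_{d,n,m}$ to a linear map between multiplicity spaces, and the goal is to exhibit a $\GL(U)$-equivariant section $\psi_{d,n,m}\colon\Sym^d(\Sym^m U)\to\Sym^m(\Sym^d U)$ with $\psi_{d,n,m}\circ\phi_{d,n,m}$ a nonzero scalar on each isotypic piece. A natural place to look for $\psi_{d,n,m}$ starts from the explicit description of $\phi_{d,n,m}$ coming from \eqref{eq:generic-prod-lin-forms}: the Foulkes--Howe map is a ``transposition of indices'' on symmetric tensors arranged in a $d\times m$ grid, and one seeks a $\GL$-equivariant averaging operator indexed by combinatorial objects (fillings of a $d\times m$ rectangle, or their equivalence classes) whose construction genuinely exploits the asymmetry $d\ge m$. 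A complementary geometric angle uses that $\im(\phi_{d,n,m})=\bk[Y_{d,n}]_m$ by \eqref{eq:def-Jdn}: injectivity for $d\ge m$ becomes a transversality statement about low-degree functions on $Y_{d,n}$, and one can hope to propagate Brion's effective surjectivity bound \cite{brion-french} into the complementary low-$m$ range by an equivariant duality.

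The main obstacle is that this is the classical Foulkes conjecture, open since 1950 and resistant to every general approach tried so far. Brion's asymptotic bound proves surjectivity of $\phi_{d,n,m}$ for $m\gg d$ but yields no information about injectivity when $m\le d$; the direct Schur-positivity attack on $h_d[h_m]-h_m[h_d]$ has succeeded only for $m\le 4$ (Thrall, Dent--Siemons); and no manifestly ``comparable'' combinatorial model for the two plethysms is known that would make an injective map of multiplicity spaces transparent. A realistic sub-goal is therefore to verify the conjecture in a restricted range, e.g.\ for small fixed $m$ or small fixed $d-m$, by writing $\phi_{d,n,m}$ on an explicit monomial basis and either checking injectivity computationally or exhibiting a section of the combinatorial form sketched above. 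Converting such partial results into the full statement for all $d\ge m$ would require a genuinely new equivariant or geometric mechanism that transports asymptotic surjectivity into low-degree injectivity, and this is where any honest plan must stop.
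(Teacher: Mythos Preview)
This statement is labeled a \emph{conjecture} in the paper, and the paper does not prove it; immediately after stating it the authors write that ``Conjecture~\ref{conj:foulkes} remains open in general'' and list only partial evidence (small $m$, computational ranges, Brion's asymptotic result). There is therefore no proof in the paper against which to compare your attempt.

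Your proposal is honest about this: you correctly identify the reformulation via injectivity of the Foulkes--Howe map $\phi_{d,n,m}$, note that the Howe approach fails already at $d=m=5$ by M\"uller--Neunh\"offer, and explicitly concede that ``any honest plan must stop'' short of a proof. That is an accurate assessment of the state of the art, but it is not a proof, and none should be expected here. One small correction: the paper credits the case $m\le 5$ to \cite{cheung} rather than to Thrall and Dent--Siemons, and the computational verifications to \cite{Muller}.
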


Indeed, in \cite[Section~2]{howe} suggests that one could try to prove Conjecture~\ref{conj:foulkes} by showing that $\phi_{d,n,m}$ is injective when $d\geq m$, and surjective when $d\leq m$. Equivalently, this would mean that $m_0(d,n)=d$. When $n=1$ this statement is true, and it is a manifestation of Hermite reciprocity. The approach fails however for a general $n$, as M\"uller and Neunh\"offer \cite{Muller} compute that the map $\phi_{5,5,5}$ is not an isomorphism. Despite the setback, Conjecture~\ref{conj:foulkes} remains open in general, and there is strong evidence to support it:
\begin{itemize}
\item as explained in \cite{cheung}, the conjecture is true when $m\leq 5$;
\item \cite{Muller} verifies computationally the cases $m\le 4$, $d\le 14$, and $m+d\le 17$, $d\le 12$;
\item \cite{brion} proves that the conjecture is true asymptotically (for $d\gg m$).
\end{itemize}

\section{Cohen--Macaulay modules supported on $Y_{d,n}$}\label{sec:mods-Ydn}

Throughout this section we assume that $\bk$ is algebraically closed with $\chr(\bk)=0$, and our goal is to study modules of covariants for the $\mf{S}_d$-action on $A_{d,n}$. In this setting, divided powers are isomorphic to symmetric powers, so we will ignore the distinction. We write $A=A_{d,n}$, $B=B_{d,n}$, and recall that $B=A^{\mf{S}_d}$. 

We also recall from \cite[Section~4.2]{ful-har} that the irreducible $\mf{S}_d$-representations are indexed by partitions $\lambda$ of size $|\lambda|=d$. We write $V_{\lambda}$ for the irreducible corresponding to $\lambda$, with the convention that $V_{(d)}$ is the trivial representation, while $V_{(1^d)}$ is the sign representation. We write $\bS_{\lambda}$ for the \defi{Schur functor} associated to $\lambda$ (see \cite[\S 2]{weyman} for basics on Schur functors, noting that $\bS_\lambda$ is denoted there by $L_{\lambda^\dagger}$, where $\dagger$ denotes the transpose partition), and we have $\bS_{(d)}U = \Sym^d U$, $\bS_{(1^d)}=\bigwedge^d U$. By \defi{Schur-Weyl duality} \cite[Exercise~6.30]{ful-har} we have a decomposition
\begin{subequations}
  \begin{equation}\label{eq:SW-decomp-A}
 A = \bigoplus_{|\lambda|=d} M_{\lambda} \oo V_{\lambda},
\end{equation}
where
\begin{equation}\label{eq:GL-decomp-M-lam}
 M_{\lambda} = \Hom_{\mf{S}_d}(V_{\lambda},A) = \bigoplus_{m\geq 0}\bS_{\lambda}(\Sym^m U)
\end{equation}
\end{subequations}
is the \defi{module of $\lambda$-covariants} for the $\mf{S}_d$-action on $A$. Note that $B=M_{(d)}$, and since $A$ is Cohen--Macaulay, finite and torsion-free over $B$, each of the direct summands $M_{\lambda}$ of $A$ is a maximal Cohen--Macaulay (MCM) $B$-module.

We write $R=R_{d,n}$, and using the natural maps $R\onto\bk[Y_{d,n}]\subseteq B_{d,n}$, we can think of each $M_{\lambda}$ as a Cohen--Macaulay $R$-module supported on $Y_{d,n}$. The projective dimension of each $M_{\lambda}$ (as an $R$-module) is then given by the codimension of $Y_{d,n}$ in $X_{d,n}$:
\begin{equation}\label{eq:pdim-M-lam}
\pdim_R(M_{\lambda}) = {d+n \choose n} - dn-1.
\end{equation}
We propose the following problem.

\begin{problem}\label{prob:res-M-lam}
 Describe the minimal free resolution of $M_{\lambda}$ as an $R$-module.
\end{problem}

Observe that since $\mf{S}_d$ acts trivially on $R$, Problem~\ref{prob:res-M-lam} is equivalent to understanding the $\mf{S}_d$-equivariant resolution $F_{\bullet}$ of $A$ as an $R$-module. We then get that $\Hom_{\mf{S}_d}\left(V_{\lambda},F_{\bullet}\right)$ is the minimal resolution of $M_{\lambda}$, and in particular
\[ \Tor_i^R(M_{\lambda},\bk) = \Hom_{\mf{S}_d}\left(V_{\lambda},\Tor_i^R(A,\bk)\right)\text{ for all }i.\]

The algebra $A$ is not only Cohen--Macaulay, but also Gorenstein: using the identification of $A$ with the homogeneous coordinate ring of the Segre variety $\op{Seg} = \left(\bP^n\right)^{\times d}$ (where $\bP^n$ denotes $\op{Proj}(\Sym^{\bullet}(U))$) with embedding line bundle $\mc{L}=\mc{O}(1,\dots,1)$, then we have a canonical identification of the dualizing module of $A$ as
\[
\begin{aligned} 
\omega_A &= \bigoplus_{r\in\bZ} \rH^0\left(\op{Seg},\omega_{\op{Seg}}\oo\mc{L}^r\right) \\
&= \rH^0\left(\op{Seg},\omega_{\op{Seg}}\oo\mc{L}^{n+1}\right) \oo A \\
& = \rH^{dn}\left(\op{Seg},\mc{L}^{-n-1}\right)^{*} \oo A \\
& = \left(\rH^n\left(\bP,\mc{O}_{\bP}(-n-1)\right)^{\oo d}\right)^{*} \oo A,
\end{aligned}
\]
where the last equality comes from the K\"unneth formula. Since the cup product in cohomology is graded commutative (that is, $y\cup x = (-1)^{|x|\cdot|y|}x\cup y$), it follows that for the above identification, $\mf{S}_d$ acts trivially on $\rH^n\left(\bP,\mc{O}_{\bP}(-n-1)\right)^{\oo d}$ when $n$ is even, and it acts via the sign representation when $n$ is odd. If we write $\det(W) = \bigwedge^N W$ for the \defi{determinant} of an $N$-dimensional representation $W$, then we have $\rH^n\left(\bP,\mc{O}_{\bP}(-n-1)\right) = \det(U^*)$. We therefore get an $\mf{S}_d\times\GL(U)$-equivariant identification
\begin{equation}\label{eq:omegaA-as-Sd-rep}
 \omega_A = \begin{cases}
 \det(U)^{\oo d} \oo A & \text{for }n\text{ even}, \\
 \det(U)^{\oo d} \oo V_{(1^d)} \oo A & \text{for }n\text{ odd}.
\end{cases}
\end{equation}

The dualizing module for $R$ is canonically identified as
\[ \omega_R = \det(\Sym^d U) \oo R,\]
and since $R$ is a regular ring, we have
\begin{equation}\label{eq:omegaA-and-Ext-c}
 \omega_A = \Ext^c_R(A,\omega_R),\text{ where }c = \dim(R)-\dim(A) = {d+n\choose n}-dn-1.
\end{equation}
More generally, we have that $\Ext^c_R(-,\omega_R)$ defines an auto-equivalence (duality) on the category of Cohen--Macaulay $R$-modules of codimension $c$, and we write
\[ M^{\vee} = \Ext^c_R(M,\omega_R).\]
We get the following description of duality for the modules of covariants.

\begin{proposition}\label{prop:M-lam-dual}
 If $n$ is even then $M_{\lambda}^{\vee}\simeq M_{\lambda}$ as $R$-modules (we say that $M_{\lambda}$ is self-dual). If $n$ is odd then $M_{\lambda}^{\vee} \simeq M_{\lambda^{\dagger}}$, where $\lambda^{\dagger}$ denotes the transpose partition to $\lambda$.
\end{proposition}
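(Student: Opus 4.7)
The plan is to apply the duality functor $\Ext^c_R(-,\omega_R)$ to the Schur--Weyl decomposition \eqref{eq:SW-decomp-A} of $A$ and compare the result with the explicit formula \eqref{eq:omegaA-as-Sd-rep} for $\omega_A$, reading off the conclusion from the uniqueness of isotypic decompositions.

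First, I would observe that each $M_\mu$ is a maximal Cohen--Macaulay $R$-module of codimension $c$, so the duality $\Ext^c_R(-,\omega_R)$ is exact on the $M_\mu$ and commutes with finite direct sums. Since $V_\mu$ is a finite-dimensional vector space on which $R$ acts trivially, one has the natural identification
\[ \Ext^c_R(M_\mu\oo V_\mu,\omega_R)\simeq M_\mu^\vee\oo V_\mu^*, \]
and $V_\mu^*\simeq V_\mu$ because irreducible $\fS_d$-representations are self-dual. Applying this termwise to \eqref{eq:SW-decomp-A} and using \eqref{eq:omegaA-and-Ext-c} yields the $\fS_d\times\GL(U)$-equivariant decomposition
\[ \omega_A \simeq \bigoplus_{|\mu|=d} M_\mu^\vee\oo V_\mu. \]

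Next, I would match this against \eqref{eq:omegaA-as-Sd-rep}. For $n$ even, the formula gives
\[ \omega_A \simeq \det(U)^{\oo d}\oo A \simeq \bigoplus_\mu \left(\det(U)^{\oo d}\oo M_\mu\right)\oo V_\mu, \]
so extracting the $V_\lambda$-isotypic component and forgetting the $\GL(U)$-twist $\det(U)^{\oo d}$ (which is one-dimensional and thus trivial as an $R$-module) gives $M_\lambda^\vee\simeq M_\lambda$. For $n$ odd, the classical identity $V_{(1^d)}\oo V_\mu\simeq V_{\mu^\dagger}$ lets us rewrite \eqref{eq:omegaA-as-Sd-rep} as
\[ \omega_A \simeq \bigoplus_\mu \left(\det(U)^{\oo d}\oo M_\mu\right)\oo V_{\mu^\dagger}; \]
reindexing by $\lambda=\mu^\dagger$ and extracting the $V_\lambda$-isotypic component gives $M_\lambda^\vee\simeq M_{\lambda^\dagger}$ as $R$-modules.

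The only nontrivial point is ensuring that the $\fS_d$-action on $\omega_A$ induced functorially from \eqref{eq:omegaA-and-Ext-c} coincides with the action coming from \eqref{eq:omegaA-as-Sd-rep} via the K\"unneth formula on top cohomology. This is precisely the parity computation already encoded in \eqref{eq:omegaA-as-Sd-rep} (where the sign of the permutation of the $d$ tensor factors on $\rH^n(\bP,\cO_{\bP}(-n-1))^{\oo d}$ introduces the twist by $V_{(1^d)}$ exactly when $n$ is odd), so once that formula is in hand the remainder of the argument is bookkeeping. I expect this sign matching to be the main conceptual obstacle, but it is resolved by the setup preceding the proposition.
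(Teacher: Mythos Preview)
Your proof is correct and follows essentially the same route as the paper: dualize the Schur--Weyl decomposition \eqref{eq:SW-decomp-A}, compare with the explicit description \eqref{eq:omegaA-as-Sd-rep} of $\omega_A$, and read off the isotypic components using $V_{(1^d)}\oo V_\mu\simeq V_{\mu^\dagger}$ in the odd case. Your added remarks on $V_\mu^*\simeq V_\mu$ and on discarding the one-dimensional twist $\det(U)^{\oo d}$ are the only elaborations beyond the paper's argument, and they are correct.
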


\addtocounter{equation}{-1}
\begin{subequations}
\begin{proof}
  If we apply the duality functor to \eqref{eq:SW-decomp-A} (keeping track of the $\mf{S}_d$-action) we get
\begin{equation}\label{eq:SW-decomp-omega-A}
\bigoplus_{|\lambda|=d} M_{\lambda}^{\vee} \oo V_{\lambda} = \omega_A.
\end{equation}
When $n$ is even, we have $\omega_A\simeq A$ as $\mf{S}_d$-equivariant $R$-modules, hence $M_{\lambda}^{\vee}\simeq M_{\lambda}$ for all $\lambda$ by comparing \eqref{eq:SW-decomp-A} with \eqref{eq:SW-decomp-omega-A}. When $n$ is odd, we have $\omega_A\simeq V_{(1^d)}\oo A$ as $\mf{S}_d$-equivariant $R$-modules, hence $M_{\lambda}^{\vee} \simeq M_{\lambda^{\dagger}}$ follows from \eqref{eq:SW-decomp-A}, \eqref{eq:SW-decomp-omega-A}, and the isomorphisms $V_{\lambda} \simeq V_{\lambda^{\dagger}} \oo V_{(1^d)}$.
\end{proof}
\end{subequations}

By specializing the result above to the case $\lambda=(d)$ we get the following.

\begin{corollary}\label{cor:B-Gorenstein}
 If $n$ is even then $B$ is a Gorenstein algebra.
\end{corollary}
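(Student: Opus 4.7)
The plan is to specialize Proposition~\ref{prop:M-lam-dual} to the trivial partition $\lambda = (d)$. Since $V_{(d)}$ is the trivial $\fS_d$-representation, picking off the trivial isotypic component of~\eqref{eq:SW-decomp-A} gives $M_{(d)} = A^{\fS_d} = B$. For $n$ even, the proposition immediately yields $B^{\vee} = M_{(d)}^{\vee} \simeq M_{(d)} = B$ as $R$-modules, so the whole argument will amount to reading this as $\omega_B \simeq B$ and checking compatibility with the $B$-module structure.

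Next I would identify $B^{\vee}$ with the canonical module $\omega_B$. Since $R$ is regular and $B$ is a Cohen--Macaulay $R$-module of codimension $c = \binom{d+n}{n} - dn - 1$ (both facts are recorded in the text: $B$ is finite over $\bk[Y_{d,n}]$ by Proposition~\ref{prop:finite-over-Ydn}, and is CM because, as noted just before Conjecture~\ref{conj:foulkes}, it has rational singularities in characteristic zero by Boutot's theorem), the usual identification of the dualizing module gives $\omega_B = \Ext^c_R(B, \omega_R) = B^{\vee}$. Combined with the previous paragraph, this shows $\omega_B \simeq B$ up to a graded twist coming from $\omega_R$ and the factor $\det(U)^{\oo d}$ in~\eqref{eq:omegaA-as-Sd-rep}, which is the defining property of the Gorenstein condition.

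The one point that merits verification---and the step I expect to be the main (mild) obstacle---is that the $R$-linear isomorphism produced by Proposition~\ref{prop:M-lam-dual} is actually $B$-linear. This is automatic from the proof of that proposition: the identification $\omega_A \simeq \det(U)^{\oo d} \oo A$ in~\eqref{eq:omegaA-as-Sd-rep} is one of $\fS_d$-equivariant $A$-modules (not merely $R$-modules), so taking $\fS_d$-invariants, and using $V_\lambda^{\fS_d} = \bk$ when $\lambda = (d)$ and zero otherwise, yields an isomorphism $\omega_B = \omega_A^{\fS_d} \simeq \det(U)^{\oo d} \oo B$ of $B$-modules. After absorbing the one-dimensional $\GL(U)$-twist, we conclude $\omega_B \simeq B$ as $B$-modules, proving that $B$ is Gorenstein.
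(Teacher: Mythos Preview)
Your proof is correct and follows exactly the paper's approach: the paper's entire proof is the single sentence ``By specializing the result above to the case $\lambda=(d)$ we get the following,'' referring to Proposition~\ref{prop:M-lam-dual}. Your additional care in verifying that $B^{\vee}=\omega_B$ and that the resulting isomorphism is $B$-linear (via the $A$-module identification~\eqref{eq:omegaA-as-Sd-rep} and passage to $\fS_d$-invariants) fills in details the paper leaves implicit, but the underlying argument is the same.
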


As a partial answer to Problem~\ref{prob:res-M-lam} we consider the shape of the minimal free resolution of $M_{\lambda}$: we noted the formula for the projective dimension in \eqref{eq:pdim-M-lam}, and our next goal is to give a bound on the \defi{Castelnuovo--Mumford regularity} of $M_{\lambda}$. Recall that for a graded $R$-module $M$ we have
\[ 
\begin{aligned}
\reg(M) &= \max\{r : \Tor_i^R(M,\bk)_{i+r}\neq 0\text{ for some }i\} \\
&= \max\{r : \Ext^j_R(M,\bk)_{-j-r}\neq 0\text{ for some }j\}
\end{aligned}
\]
When $M$ is Cohen--Macaulay of codimension $c$, it suffices to consider $i=j=c$ in the above formula. In particular, we have that $\reg(M)=r$ if and only if
\begin{equation}\label{eq:Ext-descr-reg}
 \Ext^c_R(M,\bk)_{-c-r}\neq 0\text{ and }\Ext^c_R(M,\bk)_j = 0\text{ for }j<-c-r.
\end{equation}
We show the following.

\begin{theorem}\label{thm:reg-A}
 We have that
 \[ \op{reg}(A) = (d-1)n,\]
 and therefore $\op{reg}(M_{\lambda})\leq (d-1)n$ for all $\lambda$. If $n$ is even then $\op{reg}(B)=\op{reg}(A)$, while for $n$ odd we have $\op{reg}(B)<\op{reg}(A)$.
\end{theorem}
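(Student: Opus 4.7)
The plan is to read $\reg(A)$ off the Segre-product description of $A$ via local duality, to deduce $\reg(M_\lambda)\leq\reg(A)$ from the Schur--Weyl decomposition of $A$, and finally to use the explicit $\mf{S}_d$-equivariant description~\eqref{eq:omegaA-as-Sd-rep} of $\omega_A$ to separate the two parity cases for $B$.

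Write $\delta=nd+1=\dim(A)$. Since $A$ is Cohen--Macaulay, the characterization \eqref{eq:Ext-descr-reg} (applied with $M=A$) combined with graded local duality rewrites as
\[ \reg(A) = \delta - s(\omega_A), \qquad s(\omega_A):=\min\bigl\{\,j : (\omega_A)_j\neq 0\,\bigr\}. \]
Adjunction on $(\bP^n)^{\times d}$ gives $\omega_{\op{Seg}}=\mc{L}^{-(n+1)}$, so $(\omega_A)_r=\rH^0(\op{Seg},\mc{L}^{r-n-1})$ vanishes for $r<n+1$ and is nonzero for $r\geq n+1$. Thus $s(\omega_A)=n+1$ and $\reg(A)=n(d-1)$. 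Moreover, \eqref{eq:SW-decomp-A} exhibits $M_\lambda$ as a graded $R$-module direct summand of $A$ (up to multiplicity $\dim V_\lambda$), so its top local cohomology embeds in that of $A$, and the bound $\reg(M_\lambda)\leq\reg(A)=(d-1)n$ follows.

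Applying $\Hom_{\mf{S}_d}(V_\lambda,-)$ to~\eqref{eq:omegaA-as-Sd-rep}, keeping track of the internal-degree shift implicit in that formula (visible from $(\omega_A)_r=\rH^0(\op{Seg},\mc{L}^{r-n-1})$) and using $V_\lambda\otimes V_{(1^d)}\cong V_{\lambda^\dagger}$, one obtains
\[ (M_\lambda^\vee)_j \cong (M_\mu)_{j-n-1}, \qquad \mu=\begin{cases}\lambda & n\text{ even,}\\ \lambda^\dagger & n\text{ odd.}\end{cases} \]
Together with the CM formula $\reg(M_\lambda)=\delta-s(\omega_{M_\lambda})$, this gives $\reg(M_\lambda)=n(d-1)-k(\mu)$, where $k(\mu):=\min\{\,m\geq 0:\bS_\mu(\Sym^m U)\neq 0\,\}$. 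Specializing to $\lambda=(d)$: if $n$ is even then $\mu=(d)$ and $\bS_{(d)}(\bk)=\Sym^d(\bk)=\bk\neq 0$, so $k((d))=0$ and $\reg(B)=\reg(A)$; if $n$ is odd then $\mu=(1^d)$ and $\bS_{(1^d)}(\bk)=\bigwedge^d(\bk)=0$ for $d\geq 2$, so $k((1^d))\geq 1$ and $\reg(B)\leq n(d-1)-1<\reg(A)$.

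The main bookkeeping challenge is in the third step: reconciling the internal-degree shift of $\omega_A$ (encoded by the vanishing of $\rH^0(\op{Seg},\mc{L}^{<0})$) with the parity-dependent $\mf{S}_d$-twist in~\eqref{eq:omegaA-as-Sd-rep}, which comes from the graded-commutativity of the cup product on $\rH^n(\bP^n,\mc{O}(-n-1))^{\otimes d}$. Once these shifts are untangled, the comparison for $\lambda=(d)$ reduces to the elementary plethystic observation $\bigwedge^d(\bk)=0$ for $d\geq 2$.
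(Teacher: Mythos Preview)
Your proof is correct and follows essentially the same approach as the paper: both identify $\omega_A\simeq A(-n-1)$ as graded modules and read off $\reg(A)$ from the lowest degree of the canonical/Ext module, then use the $\mf{S}_d$-equivariant description~\eqref{eq:omegaA-as-Sd-rep} to handle the parity cases for $B$. Your formulation via local duality ($\reg=\dim-s(\omega)$) is a cosmetic variant of the paper's computation of $\Ext^c_R(A,R)$, and your explicit formula $\reg(M_\lambda)=n(d-1)-k(\mu)$ is a pleasant sharpening of the paper's bare inequality $\reg(M_\lambda)\leq\reg(A)$.
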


\begin{proof}
  Let $N = \binom{n+d}{d}$. Using the earlier descriptions of $\omega_R$, $\omega_A$, we get that as graded modules
\[ \omega_R \simeq R\left(-N\right)\text{ and }\omega_A \simeq A(-n-1).\]
Using the notation \eqref{eq:omegaA-and-Ext-c} we get
\[ 
\Ext^c_R(A,R) = \Ext^c_R(A,\omega_R)\left(N \right) = \omega_A\left(N\right) = A\left(N-n-1\right).
\]
It follows from \eqref{eq:Ext-descr-reg} that $r=\reg(A)$ satisfies
\[
  -c-r = -N+n+1,
\]
which simplifies to $c=(d-1)n$, as desired. The last statement follows from the isomorphism
\[ \Ext^c_R(B,R) = \Ext^c_R(A,R)^{\mf{S}_d},\]
and \eqref{eq:omegaA-as-Sd-rep}, which shows that $\mf{S}_d$ acts trivially on the generator of minimal degree of $\omega_A$ when $n$ is even, and it acts via the sign representation when $n$ is odd. It follows that $\Ext^c_R(B,R)$ and $\Ext^c_R(A,R)$ coincide in lowest degree precisely when $n$ is even, concluding the proof.
\end{proof}

\subsection{The case $d=2$}\label{subsec:d=2} When $d=2$, we have as in Example~\ref{ex:d=2-n=2} an identification between $Y_{2,n}$ and the variety of $(n+1)\times(n+1)$ symmetric matrices of rank $\leq 2$. It follows from \cite[Theorem~6.3.1]{weyman} that $Y_{2,n}$ is normal, hence $B=\bk[Y_{2,n}]= R/J_{2,n}$, and moreover we know the minimal resolution of $B$ as an $R$-module. In particular, \cite[Corollary~6.3.7]{weyman} shows that $B$ is Gorenstein if and only if $n$ is even, so the conclusion of Corollary~\ref{cor:B-Gorenstein} is optimal in this case.

For the remaining module of covariants $M_{(1,1)}$ we have a decomposition
\[ M_{(1,1)} = \bigoplus_{m\geq 0} \bigwedge^2(\Sym^m U) = \bigoplus_{a\geq b\geq 0}\bS_{(2a+1,2b+1)}U,\]
which is the module denoted $M^s(1^2)$ in \cite[Example~6.6.11]{weyman}. Note that by Proposition~\ref{prop:M-lam-dual}, $M_{(1,1)}=\omega_B$ is the canonical module of $B$ when $n$ is odd, while for $n$ even $M_{(1,1)}$ is self-dual (see also \cite[Corollary~5.1.5]{weyman}).

\subsection{The case $n=1$}

We next assume that $\dim U = 2 = n+1$, in which case $Y_{d,1}=X_{d,1}$, since every binary form decomposes as a product of linear factors. This shows that $R=B$, and if we apply \eqref{eq:pdim-M-lam} then it follows that each $M_{\lambda}$ is a free $B$-module. Our next goal is to give an explicit equivariant decomposition of each of the modules $M_\lambda$, or equivalently, for the space of minimal generators of $M_{\lambda}$ as a $B$-module. To do so, we first introduce some combinatorial notation.

Every polynomial representation $L$ of $\GL(U)$ of degree $k$ decomposes as a direct sum of eigenspaces relative to the action of the maximal torus $(\bk^*)^2$:
\[ L = \bigoplus_{i+j=k} L_{i,j},\text{ where }t\cdot \ell = t_1^it_2^j\ell\text{ for }t=(t_1,t_2)\in(\bk^*)^2\text{ and }\ell\in L_{i,j}.\]
We define the \defi{character} of $L$ to be
\[ \op{ch}(L) = \sum_{i=0}^k \dim(L_{i,k-i}) \cdot q^i \in \bZ[q]\]
and note that the degree of $L$ together with its character completely determines $L$ as a $\GL(U)$-representation. We have for instance
\[ \op{ch}(\Sym^m U) = 1+q+\cdots+q^m =: [m+1]_q,\]
which is called a \defi{$q$-number}. For a finitely generated graded $\GL(U)$-equivariant $R$-module~$M$, we define its \defi{equivariant Hilbert series} to be
\[ \rH_M(t) = \sum_{m\in\bZ} \op{ch}(M_m) \cdot t^m.\]
For instance, the equivariant Hilbert series of $A$ and $B$ are
\begin{equation}\label{eq:Hilb-AB}
  \rH_A(t) = \sum_{m \ge 0} \left([m+1]_q\right)^d \cdot t^m, \qquad  \rH_B(t) = \sum_{m \ge 0} h_d(1,q,\dots,q^m)\cdot t^m,
\end{equation}
where  $h_d(x_0,\dots,x_m)$ is the \defi{$d$-th complete symmetric polynomial} (the sum of all degree $d$ monomials in the $x_i$). We define \defi{$q$-factorials $[m]_q!$} and \defi{$q$-binomial coefficients $\begin{bmatrix} m\\k \end{bmatrix}_q$} by 
\[[m]_q!=[m]_q[m-1]_q \cdots [1]_q \qquad \begin{bmatrix} m\\k \end{bmatrix}_q = \frac{[m]_q!}{[k]_q![m-k]_q!}.\]
It follows from \cite[Section~I.3,~Example~1]{macdonald} that
\[
  h_d(1,q,\dots,q^m) = \begin{bmatrix}m+d\\ d\end{bmatrix}_q,
\]
which combined with \cite[Section~I.2,~Example~3]{macdonald} and \eqref{eq:Hilb-AB} shows that
\[
  \rH_B(t) = \frac{1}{(1-t)(1-qt) \cdots (1-q^d t)}.
\]
To describe $\rH_A(t)$ as a rational function, we need more notation: given a permutation $\sigma \in \fS_d$, $\sigma$ has a {\bf descent} at $i$ if $\sigma(i) > \sigma(i+1)$. We let $\des(\sigma)$ denote the number of descents, and we let $\maj(\sigma)$ (the {\bf major index}) be the sum of the descents. It follows from \cite[Theorem~1]{carlitz} that
\begin{equation}\label{eq:Carlitz-HA}
  \rH_A(t) = \frac{\sum_{\sigma \in \fS_d} t^{\des(\sigma)} q^{\maj(\sigma)}}{(1-t)(1-qt) \cdots (1-q^d t)}.
\end{equation}
Since $A$ is free over $B$, the quotient
\[
  \frac{\rH_A(t)}{\rH_B(t)} = \sum_{\sigma \in \fS_d} t^{\des(\sigma)} q^{\maj(\sigma)}
\]
is the Hilbert series for the minimal generators for $A$ as a $B$-module. Notice that the maximal number of descents for a permutation $\sigma\in\mf{S}_d$ is $(d-1)$, which is compatible with the conclusion of Theorem~\ref{thm:reg-A}.

\begin{example}\label{ex:d=3-gensA}
For $d=3$, the invariants $\des(\sigma),\maj(\sigma)$ for $\sigma\in\mf{S}_3$ are as follows:
\[
\begin{array}{c|cccccc}
\sigma & 123 & 213 & 312 & 132 & 231 & 321 \\\hline
\des(\sigma) & 0 & 1 & 1 & 1 & 1 & 2 \\ \hline
\maj(\sigma) & 0 & 1 & 1 & 2 & 2 & 3 \\
\end{array}
\]
We get that
\[
\frac{\rH_A(t)}{\rH_B(t)} = 1 + t(2q+2q^2) + t^2 q^3.
\]
Since the generators of $A$ in degree $i$ are polynomial $\GL(U)$-representations of degree $3i$, we deduce:
\[\Tor_0^B(A,\bk)_0 \simeq \bk,\quad \Tor_0^B(A,\bk)_1 \simeq (\bS_{(2,1)}U)^{\oplus 2},\quad \Tor_0^B(A,\bk)_2 \simeq \bS_{(3,3)}U,\]
and in fact the groups above are the generators of $M_{(3)}$, $M_{(2,1)}^{\oplus 2}$ and $M_{(1,1,1)}$ respectively.
\end{example}

To describe the generators of each $M_{\lambda}$ we need a refinement of \eqref{eq:Carlitz-HA}. It will be useful to picture each partition $\lambda$ by its \defi{Young diagram}, consisting of left justified rows of boxes, with $\lambda_i$ boxes in row $i$. For instance, $\lambda=(5,2,1)$ has Young diagram
\[\yng(5,2,1)\]
For $|\lambda|=d$, a \defi{standard Young tableau} $T$ of shape $\lambda$ is a filling of the Young diagram of $\lambda$ with the numbers $1,\dots,d$ (each appearing once), which is increasing along both rows and columns. An example for $d=8$ and $\lambda=(5,2,1)$ is the tableau
\begin{equation}\label{eq:ex-T-standard}
\Yvcentermath1\young(13578,26,4)
\end{equation}
We let ${\rm SYT}(\lambda)$ denote the set of standard Young tableaux of shape $\lambda$, and recall that they can be used to index a basis of irreducible $\mf{S}_d$-representation $V_{\lambda}$. We set
\[ f^{\lambda} := |{\rm SYT}(\lambda)| = \dim(V_{\lambda}),\]
and note that \eqref{eq:SW-decomp-A} implies
\[A = \bigoplus_{|\lambda|=d} M_\lambda^{\oplus f_\lambda}.\]

Given a standard Young tableau $T$, we say $T$ has a {\bf descent} at $i$ if $i+1$ appears in a lower row than $i$. We define $\des(T)$ to be the number of descents of $T$ and $\maj(T)$ to be the sum of descents of $T$. For $T$ as in \eqref{eq:ex-T-standard}, we have descents at $1,3,5$, hence
\[ \des(T) = 3\text{ and }\maj(T) = 9.\]
The RSK algorithm \cite[\S 7.11]{EC2} gives a bijection $\sigma \mapsto (P(\sigma),Q(\sigma))$ between $\fS_d$ and pairs of standard Young tableaux of the same shape (and of size $d$), with the property that $\sigma$ and $Q(\sigma)$ have the same set of descents \cite[Lemma~7.23.1]{EC2}. In particular, this implies that
\[
  \sum_{\sigma \in \fS_d} t^{\des(\sigma)} q^{\maj(\sigma)} = \sum_{|\lambda|=d} f^\lambda \sum_{T \in {\rm SYT}(\lambda)} t^{\des(T)} q^{\maj(T)}.
\]
We will show that the decomposition above is compatible with \eqref{eq:SW-decomp-A}, reflecting the distribution of the generators of $A$ among the generators of the summands $M_{\lambda}$. In the next result, we let $s_\lambda(x_1,\dots,x_k)$ denote the Schur polynomial indexed by $\lambda$ \cite[\S 7.10]{EC2}, which is the character of the Schur functor $\bS_\lambda(\bC^k)$.

\begin{proposition}
  Let $\lambda$ be a partition of $d$. We have
  \[
    \rH_{M_\lambda}(t) = \sum_{m \ge 0} s_\lambda(1,q,\dots,q^m) t^m = \frac{\sum_{T \in {\rm SYT}(\lambda)} t^{\des(T)} q^{\maj(T)}}{(1-t)(1-qt)\cdots (1-q^d t)}.
  \]
\end{proposition}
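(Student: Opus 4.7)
The proposition contains two equalities. The first is an immediate application of Schur--Weyl theory together with the definition of the equivariant Hilbert series, while the second is a classical combinatorial identity that can be proved via Stanley's theory of $(P,\omega)$-partitions.

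For the first equality, I would start from the decomposition $M_\lambda=\bigoplus_{m\geq 0}\bS_\lambda(\Sym^m U)$ in \eqref{eq:GL-decomp-M-lam}. Since $U$ is two-dimensional with torus weights $1$ and $q$, the module $\Sym^m U$ has torus weights $1,q,\dots,q^m$. For any polynomial $\GL(U)$-representation $V$ with torus weights $x_1,\dots,x_N$, the character of $\bS_\lambda V$ equals the Schur polynomial $s_\lambda(x_1,\dots,x_N)$; specializing with $V=\Sym^m U$ gives $\op{ch}(\bS_\lambda(\Sym^m U))=s_\lambda(1,q,\dots,q^m)$, and summing over $m$ with weight $t^m$ produces the first equality.

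For the second equality, I would reduce to proving the identity
\[
\sum_{m\geq 0}s_\lambda(1,q,\dots,q^m)\,t^m \;=\; \frac{\sum_{T\in{\rm SYT}(\lambda)}q^{\maj T}\,t^{\des T}}{\prod_{i=0}^{d}(1-q^i t)}.
\]
The plan is to interpret $s_\lambda(1,q,\dots,q^m)=\sum_U q^{|U|}$, where $U$ ranges over semistandard Young tableaux of shape $\lambda$ with entries in $\{0,1,\dots,m\}$ and $|U|$ denotes the sum of entries. Interchanging the order of summation rewrites the left-hand side as $\frac{1}{1-t}\sum_U q^{|U|}\,t^{\max U}$. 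Next, partition the set of SSYTs according to their standardization $S\in{\rm SYT}(\lambda)$, obtained by relabeling cells with $1,\dots,d$ in increasing order of entries, with ties broken by a fixed convention. For each $S$, the fiber of the standardization map is in weight-preserving bijection with an explicit set of weakly increasing integer sequences, and one shows that this fiber contributes $q^{\maj S}\,t^{\des S}/\prod_{i=1}^d(1-q^i t)$ to the generating function. Summing over $S$ and combining with the prefactor $\frac{1}{1-t}$ yields the desired identity.

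The main obstacle lies in the combinatorial bookkeeping for the standardization fibers. One must choose the right tie-breaking convention so that, for each $S\in{\rm SYT}(\lambda)$, the monotonicity conditions on the entries parametrizing the fiber match up with the descent positions $D(S)$: strict inequalities need to be forced exactly where required to produce a prefactor of $q^{\maj S}\,t^{\des S}$ after a standard substitution absorbing the strict inequalities into shifts of the sequence. This is essentially the content of Stanley's $(P,\omega)$-partition theorem applied to the Young diagram poset, and a clean exposition can be found in \cite[\S7.19]{EC2}. Alternatively, one could derive the identity by combining the Carlitz formula \eqref{eq:Carlitz-HA} with RSK, then separating the contributions of the different $M_\lambda$ through a linear-independence argument on their $\GL(U)$-characters.
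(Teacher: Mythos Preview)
Your argument is correct. The treatment of the first equality is the same as in the paper. For the second equality the paper takes a shorter path: it invokes Stanley's formula \cite[Proposition~7.19.12]{EC2}
\[
  s_\lambda(1,q,\dots,q^m) = \sum_{T \in {\rm SYT}(\lambda)} \begin{bmatrix} m-\des(T)+d \\ d \end{bmatrix}_q q^{\maj(T)},
\]
and then simply sums over $m\ge 0$, shifts the index by $\des(T)$, and uses the known identity $\sum_{m\ge 0}\left[\begin{smallmatrix} m+d\\ d\end{smallmatrix}\right]_q t^m = \prod_{i=0}^d (1-q^i t)^{-1}$. Your standardization\,/\,$(P,\omega)$-partition argument is precisely the engine behind Stanley's formula, so you are in effect rederiving the cited result rather than quoting it. Both routes are valid; the paper's is more economical, while yours is self-contained and makes explicit where the statistics $\des$ and $\maj$ enter. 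Your alternative suggestion via Carlitz's formula and RSK also works but requires an additional character-independence step that the direct citation avoids.
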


\begin{proof}
  The first equality follows from \eqref{eq:GL-decomp-M-lam}, so we only need to verify the second one. By \cite[Proposition 7.19.12]{EC2}, we have
  \[
    s_\lambda(1,q,\dots,q^m) = \sum_{T \in {\rm SYT}(\lambda)} \begin{bmatrix} m-\des(T)+d \\ d \end{bmatrix}_q q^{\maj(T)}.
  \]
  If we sum over all $m \ge 0$ then we get
  \begin{align*}
    \sum_{m \ge 0} s_\lambda(1,q,\dots,q^m)t^m
    &= \sum_{T \in {\rm SYT}(\lambda)} q^{\maj(T)} \sum_{m \ge 0} \begin{bmatrix} m-\des(T)+d\\d\end{bmatrix}_q t^m\\
    &= \sum_{T \in {\rm SYT}(\lambda)} q^{\maj(T)} \sum_{m \ge 0} \begin{bmatrix} m+d\\d\end{bmatrix}_q t^{m+\des(T)}\\
    &= \sum_{T \in {\rm SYT}(\lambda)} q^{\maj(T)}\frac{t^{\des(T)}}{(1-t)(1-qt)\cdots (1-q^dt)},
  \end{align*}
  which proves the result.
\end{proof}

As a consequence, we get the following description of the minimal generators of $M_{\lambda}$.

\begin{corollary}\label{cor:mingens-Mlam}
  As a $B$-module, $M_\lambda$ has one generator for every standard Young tableau $T$ of shape $\lambda$, and the generator corresponding to $T$ lies in degree $\des(T)$. Furthermore, 
  \[ \op{ch}\left(\Tor_0^B(M_{\lambda},\bk)_i\right) = \sum_{\substack{T \in {\rm SYT}(\lambda)\\ \des(T)=i}} q^{\maj(T)}.\]
\end{corollary}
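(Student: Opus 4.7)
The plan is to combine the Hilbert series computation from the preceding proposition with the freeness of $M_\lambda$ as a $B$-module to read off both the degrees and the $\GL(U)$-equivariant structure of the minimal generators in one step.

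First, I would observe that $M_\lambda$ is a free graded $\GL(U)$-equivariant $B$-module. Indeed, in the case $n=1$ we have $R = B$ since $Y_{d,1} = X_{d,1}$ is an affine space, and the projective dimension formula \eqref{eq:pdim-M-lam} gives $\pdim_R(M_\lambda) = \binom{d+1}{1} - d - 1 = 0$, so $M_\lambda$ is $R$-free, hence $B$-free. Consequently we may write
\[
  M_\lambda \;\cong\; \bigoplus_{i \ge 0} B(-i) \oo W_i
\]
as a graded $\GL(U)$-equivariant $B$-module, where $W_i$ is a finite-dimensional polynomial $\GL(U)$-representation naturally identified with $\Tor_0^B(M_\lambda,\bk)_i$.

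Next I would take equivariant Hilbert series: freeness yields the factorization
\[
  \rH_{M_\lambda}(t) \;=\; \rH_B(t) \cdot \sum_{i \ge 0} \op{ch}(W_i)\, t^i,
\]
so dividing the formula from the preceding proposition by $\rH_B(t) = \bigl((1-t)(1-qt)\cdots(1-q^d t)\bigr)^{-1}$ produces
\[
  \sum_{i \ge 0} \op{ch}(W_i)\, t^i \;=\; \sum_{T \in {\rm SYT}(\lambda)} t^{\des(T)}\, q^{\maj(T)}.
\]
Extracting the coefficient of $t^i$ gives $\op{ch}(W_i) = \sum_{T:\,\des(T)=i} q^{\maj(T)}$, which is the second assertion of the corollary. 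Since the character and polynomial degree together determine a polynomial $\GL(U)$-representation (as noted right after the definition of $\op{ch}$), the first assertion about the count and degrees of generators follows as well: there is one generator per $T \in {\rm SYT}(\lambda)$, contributed in degree $\des(T)$.

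There is essentially no obstacle: the entire argument is a direct consequence of the preceding proposition once freeness is in hand, and freeness itself is immediate from \eqref{eq:pdim-M-lam}. The only subtlety worth noting is that the equivariant Hilbert series keeps track of the $\GL(U)$-character, so extracting generators from it automatically recovers the $\GL(U)$-structure of $\Tor_0^B(M_\lambda,\bk)$ and not merely its dimension.
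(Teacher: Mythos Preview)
Your proposal is correct and follows exactly the approach the paper intends: the paper does not give a separate proof of this corollary, but the surrounding text has already established that $R=B$ and each $M_\lambda$ is $B$-free (via \eqref{eq:pdim-M-lam}), and has already used the device of dividing $\rH_A(t)$ by $\rH_B(t)$ to read off minimal generators, so the corollary is meant to be the immediate consequence of the preceding proposition that you spell out.
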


Expressions such as the one in Corollary~\ref{cor:mingens-Mlam} have been widely studied in combinatorics (see for instance \cites{KR,gos,CEKS,keith,GZ}), but this is the first time we encountered them in an invariant theoretic setting.

\section{Equations of Chow variety in characteristic zero} \label{s:char-zero}

In this section we continue to assume that $\bk$ is algebraically closed of characteristic zero. We will briefly discuss some generalities regarding equations for Chow varieties, and then proceed with a number of examples.

\subsection{Brill's equations}

Brill's equations were introduced originally in \cite{brill1,brill2} (see also the exposition by Gordan \cite{gordan}) and then described in \cite[\S 4.2]{gkz}. These are equations of degree $d+1$ which define $Y_{d,n}$ set-theoretically and span a subrepresentation of $\Sym^d U \otimes \Sym^d U \otimes \Sym^{d(d-1)} U$. In fact, one has a more precise result of Guan \cite{guan}.

\begin{proposition}[Guan] \label{prop:guan}
  Suppose $\dim U\ge 3$ and $d\ge 2$. The $\GL(U)$-module given by the span of Brill's equations has the following decomposition into irreducible representations:
\begin{enumerate}[\rm \indent (a)]
\item $\bS_{(7,3,2)} U$ for $d=3$.
\item $\bigoplus_{j=2}^d \bS_{(d^2-j,d,j)}U$ for $d\ne 3$.
\end{enumerate}
\end{proposition}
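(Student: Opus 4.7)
The plan is to realize the span of Brill's equations as the image of an explicit $\GL(U)$-equivariant linear map and then decompose that image by combining Littlewood--Richardson with an analysis of highest weight vectors.

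First, I would unpack the classical construction of Brill's equations from \cite[\S 4.2]{gkz}. This produces, for each $F \in \Sym^d U^*$, an element $B(F)$ in a prescribed $\GL(U)$-representation $W$ (built as a tensor product of symmetric powers of $U^*$), with the property that $B(F) = 0$ precisely when $F$ factors as a product of $d$ linear forms, and the assignment $F \mapsto B(F)$ is polynomial of degree $d+1$ in $F$. Linearizing this packages the data as a $\GL(U)$-equivariant linear map
\[
\beta \colon W^* \longrightarrow \Sym^{d+1}(\Sym^d U),
\]
and by definition the span of Brill's equations is $\op{im}(\beta)$.

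Second, I would bound the possible irreducible constituents of $\op{im}(\beta)$. Any such constituent $\bS_\mu U$ must satisfy $|\mu| = d(d+1)$ from the total $\GL(U)$-weight, and the explicit shape of the Brill construction forces $\mu$ to have at most three nonzero parts. Decomposing $W$ via Pieri/Littlewood--Richardson produces a finite candidate list of partitions, and a direct character computation should narrow this list to a family of the form $\{(d^2-j,\, d,\, j)\}$ for some range of $j$. Note that the constraints $d^2-j \ge d$ and $d \ge j$ already force $2 \le j \le d$ to be the maximal reasonable range, since for $j=0,1$ the middle row cannot be made compatible with the other $\GL$-equivariant data extracted from $\Sym^d U$.

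Third, I would verify which candidates survive. For each candidate $\bS_{(d^2-j,d,j)}U$ I would produce an explicit highest weight vector and test it against $\beta$. For $d \ne 3$, the computation should show that each $j$ in the range $2 \le j \le d$ produces a nonzero image, yielding exactly the claimed decomposition. For $d=3$ the candidate $\bS_{(6,3,3)}U$ has its last two rows of equal length $d$, and I expect an additional antisymmetry inherent in the $d=3$ case of Brill's construction (where the degrees $d$ and $d(d-1)$ coincide in a way that they do not for $d\ne 3$) to force this isotypic component to vanish, leaving only $\bS_{(7,3,2)}U$.

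The main obstacle I anticipate is this last step: carrying out the highest weight vector calculations to distinguish which candidates in the family $\{(d^2-j,d,j)\}$ actually appear in the image, and isolating the specific algebraic accident at $d=3$ that kills the would-be summand $\bS_{(6,3,3)}U$. This is essentially a delicate plethystic computation that requires tracking the symmetrization and the internal multilinear structure of Brill's construction carefully, and the case analysis is where the argument is likely to be most technical.
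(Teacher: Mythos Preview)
The paper does not actually prove this proposition: it is stated as a result of Guan, with a citation to \cite{guan}, and no argument is given in the paper itself. So there is no ``paper's own proof'' to compare your proposal against.

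That said, your outline is a reasonable sketch of the strategy one would expect in Guan's original argument: realize the span of Brill's equations as the image of an explicit $\GL(U)$-map, use Pieri/Littlewood--Richardson to list candidate $\bS_\mu U$ with $\ell(\mu)\le 3$ and $|\mu|=d(d+1)$, and then test highest weight vectors to see which survive. Your identification of the hard step is also accurate: the content lies entirely in the plethystic/highest weight computation, and in particular in explaining the exceptional vanishing of $\bS_{(6,3,3)}U$ at $d=3$. As written, though, your proposal is a plan rather than a proof --- none of the candidate list, the highest weight vectors, or the $d=3$ cancellation is actually carried out --- so if the goal were to supply a self-contained proof you would still need to do that work (or, as the paper does, simply cite \cite{guan}).
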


Let $L_{d,n}$ denote the ideal generated by Brill's equations. It is an interesting question to compare the ideals $L_{d,n}$ and $J_{d,n}$. For a general pair $(d,n)$ these ideals are not equal. To see this, 
let $D_{d,n,r}$ denote the subvariety of symmetric tensors of degree $d$ in $n+1$ variables, which have subspace rank $\le r$, that is, after a change of basis in $U$, they can be rewritten using at most $r$ variables. There is a natural set of equations vanishing on $D_{d,n,r}$ consisting of $(r+1)\times (r+1)$ minors of a matrix of the form
\begin{align*} 
  \psi \colon U\otimes R_{d,n}(-1)\rightarrow \Sym^{d-1}U^*\otimes R_{d,n}.
\end{align*}

We let $I_{d,n,r+1}$ denote the ideal generated by the $(r+1)\times (r+1)$ minors of $\psi$.
This ideal is non-zero for $1\le r\le n$: see \cite[\S 7.2]{weyman}, particularly Corollary~7.2.3, for more information on such ideals. The varieties $D_{d,n,r}$ and their defining equations were analyzed in \cite{porras}. It is clear that $Y_{d,n}\subset D_{d,n,d}$, since the expression $f = \ell_1\ell_2\cdots\ell_d$ shows that $f$ can be written using at most $d$ variables up to a change of basis.

\begin{proposition}\label{prop:notequal}
  For $n \ge d \ge 3$, the ideals $L_{d,n}$ and $J_{d,n}$ are different.
\end{proposition}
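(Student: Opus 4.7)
The plan is to prove $L_{d,n} \subsetneq J_{d,n}$ by exhibiting equations in $J_{d,n} \setminus L_{d,n}$ among the $(d+1) \times (d+1)$ minors of $\psi$. The containment $I_{d,n,d+1} \subseteq J_{d,n}$ is immediate from the geometry: any product $\ell_1 \cdots \ell_d$ can, after a linear change of coordinates in $U$, be written using only $d$ of the variables $x_0,\dots,x_n$, so $Y_{d,n} \subseteq D_{d,n,d}$, and $I_{d,n,d+1}$ vanishes on $D_{d,n,d}$ by construction. Since $J_{d,n}$ is radical, this gives $I_{d,n,d+1} \subseteq J_{d,n}$.

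To show $I_{d,n,d+1} \not\subseteq L_{d,n}$, I exploit that both ideals are generated in degree $d+1$, so it suffices to compare their degree-$(d+1)$ pieces as $\GL(U)$-subrepresentations of $R_{d+1} = \Sym^{d+1}(\Sym^d U)$. By Proposition~\ref{prop:guan}, every irreducible constituent of $(L_{d,n})_{d+1}$ is $\bS_\mu U$ with $\ell(\mu) = 3$. I claim, in contrast, that every irreducible constituent of $(I_{d,n,d+1})_{d+1}$ is $\bS_\mu U$ with $\ell(\mu) \geq d+1 \geq 4$, so the two subrepresentations share no irreducibles and (being nonzero) neither contains the other. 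To establish the length bound, identify the minor span with the image of the $\GL(U)$-equivariant map
\[
  \Phi\colon \bigwedge^{d+1}U \oo \bigwedge^{d+1}\Sym^{d-1}U \lra R_{d+1}
\]
induced from the $(d+1)$-fold wedge of $\psi$ after symmetrizing the $R_1$-factors; then, decomposing $\bigwedge^{d+1}\Sym^{d-1}U$ into Schur functors and applying the Pieri rule $\bS_{(1^{d+1})}U \oo \bS_\nu U = \bigoplus_\mu \bS_\mu U$ (summed over partitions $\mu$ obtained from $\nu$ by adding a vertical strip of size $d+1$), one sees $\ell(\mu) \ge d+1$ for every irreducible in the source, hence in the image. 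Nontriviality of the minor ideal in our range follows from the concrete example $F = x_0^d + \cdots + x_n^d$: its partial derivatives $d\,x_i^{d-1}$ are linearly independent, so $\psi(F)$ has rank $n+1 \ge d+1$ and some $(d+1)\times(d+1)$ minor is nonzero.

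The main technical point requiring care is the clean equivariant identification of the minor span with the image of $\Phi$. This is best handled by recognizing $\psi$ as the tautological element of $U^* \oo \Sym^{d-1}U^* \oo R_1$ arising from the canonical decomposition $\Sym^d U^* \hookrightarrow U^* \oo \Sym^{d-1} U^*$, so that the $(d+1)\times(d+1)$ minors are by construction the components of $\bigwedge^{d+1}\psi$ after symmetrizing in the $R_1$-factors, which is precisely what $\Phi$ extracts. Everything else is then purely representation-theoretic and immediate from Pieri.
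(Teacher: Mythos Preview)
Your argument is correct and follows essentially the same route as the paper: show $I_{d,n,d+1}\subseteq J_{d,n}$ via $Y_{d,n}\subseteq D_{d,n,d}$, then separate $I_{d,n,d+1}$ from $L_{d,n}$ in degree $d+1$ by a length-of-partition obstruction coming from the factor $\bigwedge^{d+1}U$. The only cosmetic differences are that you read off $\ell(\mu)=3$ for the Brill generators directly from Guan's decomposition, whereas the paper instead applies Pieri to the ambient space $\Sym^d U\otimes\Sym^d U\otimes\Sym^{d(d-1)}U$ to get $\ell(\mu)\le 3$, and that you verify $I_{d,n,d+1}\neq 0$ via the explicit form $x_0^d+\cdots+x_n^d$ rather than by citing \cite{weyman}.
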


\begin{proof}
  Since $Y_{d,n}\subset D_{d,n,d}$, we have $I_{d,n,d+1} \subseteq J_{d,n}$. By \cite[\S 7.2]{weyman}, the minors generating $I_{d,n,d+1}$ span a subrepresentation of $\bigwedge^{d+1}U\otimes\bigwedge^{d+1}(\Sym^{d-1}U)$, which is non-zero since $n\geq d$. By Pieri's formula \cite[Corollary 2.3.5]{weyman}, if $\bS_\lambda U$ appears in $\Sym^d U\otimes \Sym^d U\otimes \Sym^{d(d-1)} U$, then $\ell(\lambda) \le 3$. However, if $\bS_\lambda U$ appears in $\bigwedge^{d+1} U \otimes \bigwedge^{d+1}(\Sym^{d-1} U)$, then $\ell(\lambda) \ge d+1 > 3$, hence $I_{d,n,d+1} \not\subseteq L_{d,n}$ (since both ideals are generated in degree $d+1$).
\end{proof}

As noted in Section~\ref{subsec:d=2}, when $d=2$ we have that $Y_{2,n}$ is the space of rank $\le 2$ symmetric matrices, hence the ideal $J_{2,n}$ is generated by the $3 \times 3$ minors of the symmetric matrix. The first interesting case is therefore $d=3$, which we consider next.

\subsection{The case $d=3$}

\begin{proposition}\label{prop:d=3}
If $d=3$ then $L_{3,n}+I_{3,n,3} = J_{3,n}$.
\end{proposition}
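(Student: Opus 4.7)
The strategy is to combine an easy containment with a reduction to the $n=2$ case via a Grassmannian desingularization of $D_{3,n,3}$. The inclusion $L_{3,n} + I_{3,n,3} \subseteq J_{3,n}$ is immediate: Brill's equations vanish on $Y_{3,n}$ by construction, and $Y_{3,n} \subseteq D_{3,n,3}$ since any product of three linear forms uses at most three variables after a change of basis. For the reverse inclusion, I would first verify set-theoretic equality. If $f$ is annihilated by $I_{3,n,3}$, then $f \in \Sym^3 V^*$ for some $3$-dimensional subspace $V \subseteq U$; by $\GL(U)$-equivariance, Brill's equations restrict to the Brill equations for ternary cubics in $V^*$, and these classically define $Y_{3,2}(V)$ set-theoretically, so $f \in Y_{3,n}$.

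For the scheme-theoretic statement, let $\cR \subset U \oo \cO_{\Gr}$ be the tautological subbundle on $\Gr = \Gr(3,U)$ and set $\tilde D = \Sym^3 \cR^*$, the total space of the associated rank-$10$ vector bundle, with its proper map $p\colon \tilde D \to X_{3,n}$ onto $D_{3,n,3}$. By Kempf's collapsing theorem (\cite[Chapters~5 and 7]{weyman}), $D_{3,n,3}$ has rational singularities, $I_{3,n,3}$ is its reduced defining ideal, and $p_\ast \cO_{\tilde D} = R/I_{3,n,3}$ with $R^{>0} p_\ast \cO_{\tilde D} = 0$. The incidence subscheme $\tilde Y = \{(V,f) \in \tilde D : f \in Y_{3,2}(V)\}$ projects birationally onto $Y_{3,n}$. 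Granting that $Y_{3,n}$ has rational singularities and that $R^1 p_\ast \cI_{\tilde Y} = 0$, pushing forward the short exact sequence $0 \to \cI_{\tilde Y} \to \cO_{\tilde D} \to \cO_{\tilde Y} \to 0$ yields an isomorphism $J_{3,n}/I_{3,n,3} \cong p_\ast \cI_{\tilde Y}$ of $R/I_{3,n,3}$-modules.

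To finish, I would identify $p_\ast \cI_{\tilde Y}$ with the image of $L_{3,n}$ in $R/I_{3,n,3}$. Since $\tilde D \to \Gr$ is a vector bundle, $\cI_{\tilde Y}$ corresponds to a graded $\Sym^{\bullet}(\Sym^3 \cR)$-module on $\Gr$ whose fiber at $V$ is the ideal of $Y_{3,2}(V) \subseteq \Sym^3 V^*$. The key base case is that $J_{3,2}$ is generated, as an ideal in $R_{3,2}$, by the $35$-dimensional irreducible $\bS_{(7,3,2)} U \subseteq \Sym^4(\Sym^3 U^*)$ --- i.e., by the Brill equations for ternary cubics. This can be established by computing the equivariant Hilbert series of $\bk[Y_{3,2}]$ via its normalization $B_{3,2}$ from Section~\ref{s:char-free} and checking that the kernel of $R_{3,2} \to \bk[Y_{3,2}]$ in degree $4$ is exactly $\bS_{(7,3,2)} U$. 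Applying Borel--Weil--Bott on $\Gr$ then shows that $p_\ast \cI_{\tilde Y}$ is generated as an $R/I_{3,n,3}$-module by a copy of $\bS_{(7,3,2)} U$, which by Proposition~\ref{prop:guan} is precisely the image of $L_{3,n}$.

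The main obstacle is the cohomological input underlying the isomorphism $J_{3,n}/I_{3,n,3} \cong p_\ast \cI_{\tilde Y}$: the rational singularities of $Y_{3,n}$ (giving $p_\ast \cO_{\tilde Y} = R/J_{3,n}$ and the vanishing of higher direct images) together with $R^1 p_\ast \cI_{\tilde Y} = 0$. A natural route for the former is via the parametrization $\mu_{3,n}\colon Z_{3,n} \to Y_{3,n}$ and the fact that $B_{3,n}$ is a ring of invariants for the reductive group $\mf{S}_3 \ltimes T$, hence has rational singularities by Boutot's theorem \cite{boutot}; the latter vanishing should follow from a Borel--Weil--Bott computation on $\Gr$ exploiting the fiber-wise structure of $\cI_{\tilde Y}$.
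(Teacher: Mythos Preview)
Your approach and the paper's both reduce to the base case $L_{3,2}=J_{3,2}$ (Corollary~\ref{cor:d3n3}), but the paper's reduction is a one-line representation-theoretic argument rather than a geometric one. The paper invokes \cite[Corollary~7.2.3]{weyman}: the ideal $I_{3,n,3}$ is exactly the sum of all isotypic components $\bS_\lambda U\subset R_{3,n}$ with $\ell(\lambda)\ge 4$. Hence $R_{3,n}/I_{3,n,3}$ contains only Schur functors with at most three rows, and since $L_{3,n}$, $J_{3,n}$, $I_{3,n,3}$ are all values of polynomial functors in $U$, the equality $(L_{3,n}+I_{3,n,3})/I_{3,n,3}=J_{3,n}/I_{3,n,3}$ can be tested after specializing to $\dim U=3$, where $I_{3,2,3}=0$. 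That is the entire proof.

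Your Grassmannian desingularization is essentially the geometric incarnation of this same fact (indeed $R/I_{3,n,3}=\bigoplus_m H^0(\Gr,\Sym^m(\Sym^3\cS))$, and the three-row restriction is Borel--Weil on $\Gr$), so it is not wrong, but it carries extra baggage. Two of your stated obstacles dissolve on inspection: rational singularities of $Y_{3,n}$ are not needed, only normality (known in characteristic zero, cf.\ Remark~\ref{rem:normality}), which already gives $p_*\cO_{\tilde Y}=R/J_{3,n}$ via Zariski's main theorem; and then $R^1p_*\cI_{\tilde Y}=0$ is forced by the long exact sequence, since $R/I_{3,n,3}\to R/J_{3,n}$ is surjective and $R^1p_*\cO_{\tilde D}=0$. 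The only substantive step left in your outline is the Borel--Weil--Bott verification that $p_*\cI_{\tilde Y}$ is generated by $\bS_{(7,3,2)}U$; this works because every relevant bundle is $\bS_\mu$ of a rank-$3$ bundle with $\ell(\mu)\le 3$, hence has vanishing higher cohomology---but that is precisely the content of \cite[Corollary~7.2.3]{weyman} that the paper cites directly. In short, your route is correct in spirit but reproves geometrically, with some unnecessary worry, what the paper obtains by quoting one structural fact about $I_{3,n,3}$.
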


\begin{proof}
  It suffices to check that $L_{3,n}$ and $J_{3,n}$ are equal modulo $I_{3,n,3}$. To do that, we use the fact (see \cite[Corollary 7.2.3]{weyman}) that $I_{3,n,3}$ consists of all subrepresentations of $R_{3,n}$ that are isomorphic to $\bS_\lambda U$ with $\ell(\lambda)\ge 4$. Hence every Schur functor $\bS_\lambda U$ that appears in $R_{3,n}/I_{3,n,3}$ has $\ell(\lambda) \le 3$, so that equality can be checked when $\dim U = 3$. This case follows from Corollary~\ref{cor:d3n3} below.
\end{proof}

\begin{remark}
  For general $d$, the argument in the previous proof tells us that to understand $J_{d,n}$, we can work modulo $I_{d,n,d}$, and then it suffices to consider the case $n=d$.
\end{remark}

\subsubsection { Case of $d=3$, $n=2$}
We now deal with the case $d=3$, $n=2$. We denote $R:=R_{3,2}=\Sym (\Sym^3 U)$, with $\dim U=3$.
We write the generic cubic form as
\[
  f(x,y,z)=\sum_{\alpha ,\beta ,\gamma} a_{\alpha\beta \gamma}x^\alpha y^\beta z^\gamma.
\]
We can compute the minimal free resolution of the $R$-module $B_{3,2}$.

\begin{example}\label{thm:a_{33}}
  The equivariant minimal free resolution of the algebra $B_{3,2}$ considered as an $R$-module has the form
\[
  0\rightarrow {\begin{array}{c} \bS_{5,5,5}U\otimes R(-5)\\\oplus\\ \bS_{7,7,7}U\otimes R(-7) \end{array}} \xrightarrow{d_3} \bS_{5,5,2}U\otimes R(-4) \xrightarrow{d_2} \bS_{5,2,2}U\otimes R(-3) \xrightarrow{d_1}  {\begin{array}{c} R \\ \oplus \\ \bS_{2,2,2} U \otimes R(-2) \end{array}}.
\]
The map $d_2$ is defined uniquely up to a non-zero scalar by the equivariance condition. The map $d_1$ has the linear component that is just a vector of coefficients $a_{\alpha\beta\gamma}$ and the cubic component which consists of coefficients of the Hessian covariant 
\[
  H(f(x,y,z))=\det \left(\begin{matrix} \partial^2f\over{\partial}x^2&\partial^2 f\over{\partial x\partial y}&\partial^2 f\over{\partial x\partial z}\\
\partial^2f\over{\partial x\partial y}&\partial^2 f\over{\partial y^2}&\partial^2 f\over{\partial y\partial z}\\
\partial^2f\over{\partial x\partial z}&\partial^2 f\over{\partial y\partial z}&\partial^2 f\over{\partial z^2}
\end{matrix}\right).
\]
There are two non-zero scalars involved, but they both need to be non-zero, so up to a change of basis in $\bS_{2,2,2}U\otimes R(-2)\oplus R$ the choice is unique.

The easiest way to deal with this is to exhibit the matrix of $d_2$ explicitly. We identify $\bS_{5,2,2}U$ with $(\det U)^{\otimes 2}\otimes \Sym^3 U$ and $\bS_{5,5,2}U$ with $(\det U)^{\otimes 5}\otimes \Sym^3U^*$. we get the following skew-symmetric matrix for $d_2$:
\[
  \begin{bmatrix}0&0&0&a_{003}&0&0&-3a_{012}&0&3a_{021}&-a_{030}\\
0&0&-3a_{003}&0&0&6a_{012}&3a_{102}&-3a_{021}&-6a_{111}&3a_{120}\\
0&3a_{003}&0&0&-3a_{012}&-6a_{102}&0&6a_{111}&3a_{201}&-3a_{210}\\
-a_{003}&0&0&0&3a_{102}&0&0&-3a_{201}&0&a_{300}\\
0&0&3a_{012}&-3a_{102}&0&-6a_{021}&6a_{111}&3a_{030}&-3a_{120}&0\\
0&-6a_{012}&6a_{102}&0&6a_{021}&0&-6a_{201}&-6a_{120}&6a_{210}&0\\
3a_{012}&-3a_{102}&0&0&-6a_{111}&6a_{201}&0&3a_{210}&-3a_{300}&0\\
0&3a_{021}&-6a_{111}& 3a_{201}&-3a_{030}&6a_{120}&-3a_{210}&0&0&0\\
-3a_{021}&6a_{111}&-3a_{201}&0&3a_{120}&-6a_{210}&3a_{300}&0&0&0\\
a_{030}&-3a_{120}&3a_{210}&-a_{300}&0&0&0&0&0&0
\end{bmatrix}.
\]
We also know the form of  $d_1$ and the kernel of the transpose which is $d_3$ is its transpose since the matrix of $d_2$ is skew-symmetric. 

To see this resolution resolves the algebra $B_{3,3}$ note that the square of the extra generator $\bS_{2,2,2}U$ is in the image of the $R$-submodule generated by the unit generator $\bS_{0,0,0}U$,
so it can be viewed as the square root of the Aronhold invariant $\bS_{4,4,4}U\subset \Sym^4(\Sym^3U)$. See \cite{ottaviani} for the explicit description of an Aronhold invariant. Note that in order to prove it is non-zero, he evaluates it on the polynomial $xyz$, which shows that this invariant is not in the ideal $J_{3,3}$.

Finally, one can prove acyclicity by hand using the Buchsbaum--Eisenbud acyclicity criterion. More precisely,
the rank conditions are obvious. The $2\times 2$ minors of $d_1$ generate the ideal of depth $3$ as set-theoretically they give the Chow variety.
Indeed, one needs to check that the form $f(x,y,z)$ is proportional to its Hessian $H(f(x,y,z))$ precisely when $f(x,y,z)$ is in the Chow variety.
This fact is attributed to Aronhold \cite{aronhold}.
One way to see it geometrically is to observe that the common zeros of $f(x,y,z)$ and its Hessian are inflection points of the curve $C$ given by $f(x,y,z)$ so if $f(x,y,z)$ and $H(f(x,y,z))$ are proportional,
then every point of $C$ is its inflection point, so $C$ is a union of lines.

It remains to check that maximal nonvanishing minors of $d_2$ generate an ideal of depth $2$. But they also give set-theoretically the Chow variety. This can also be checked directly on orbit representatives, as orbits of ternary cubics are known.
\end{example}

We continue with drawing consequences from Theorem~\ref{thm:a_{33}}.

\begin{corollary} \label{cor:d3n3}
\begin{enumerate}[\rm \indent (a)]
\item The module $C_{3,2}$ is isomorphic to $(\det U)^{\otimes 2}(-2)$.
\item The ideal $J_{3,2}$ is generated by the Brill equations, i.e., the representation $\bS_{(7,3,2)}U$ in degree $4$.
\end{enumerate}
\end{corollary}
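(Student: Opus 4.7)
The plan is to read (a) off the resolution of $B_{3,2}$ in Example~\ref{thm:a_{33}}, and then to deduce (b) from (a) via the long exact sequence of $\Tor$ attached to $0 \to \bk[Y_{3,2}] \to B_{3,2} \to C_{3,2} \to 0$. For (a), observe that the surjection $F_0 = R \oplus \bS_{(2,2,2)}U \otimes R(-2) \twoheadrightarrow B_{3,2}$ sends the first summand onto the $R$-span of $1 \in B_{3,2}$, which is $\bk[Y_{3,2}]$, and the second summand onto the submodule generated by the degree-$2$ extra generator. Thus
\[
C_{3,2} \cong \bigl(\bS_{(2,2,2)}U \otimes R(-2)\bigr)\big/\im(d_1^{\mathrm{lin}}),
\]
where $d_1^{\mathrm{lin}}$ is the component of $d_1$ landing in $\bS_{(2,2,2)}U \otimes R(-2)$, given on generators by the natural inclusion $\bS_{(5,2,2)}U \hookrightarrow \bS_{(2,2,2)}U \otimes \Sym^3 U$. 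For $\dim U = 3$ one has $\bS_{(2,2,2)}U \otimes \Sym^3 U = (\det U)^{\otimes 2} \otimes \Sym^3 U \cong \bS_{(5,2,2)}U$ (a single $10$-dimensional irreducible), so this inclusion is an isomorphism. After trivializing the $(\det U)^{\otimes 2}$ twist, $d_1^{\mathrm{lin}}$ becomes the multiplication $\Sym^3 U \otimes R(-3) \to R(-2)$, whose image in degree $m \geq 3$ is all of $R_{m-2}$ because $R$ is generated in degree one. Hence $C_{3,2}$ is supported only in degree~$2$, where it equals $(\det U)^{\otimes 2}$.

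For (b), I would apply $\Tor^R(-,\bk)$ to the short exact sequence above. The terms $\Tor^R_i(B_{3,2},\bk)$ are read off Example~\ref{thm:a_{33}}, and since $C_{3,2}$ is by (a) a shifted $(\det U)^{\otimes 2}$-twist of the residue field, the Koszul complex yields $\Tor^R_i(C_{3,2},\bk) = \bigwedge^i(\Sym^3 U) \otimes (\det U)^{\otimes 2}$, concentrated in degree $2+i$. For $\dim U = 3$, $\bigwedge^2(\Sym^3 U) = \bS_{(5,1)}U \oplus \bS_{(3,3)}U$, so $\Tor^R_2(C_{3,2},\bk) = \bigl(\bS_{(7,3,2)}U \oplus \bS_{(5,5,2)}U\bigr)(-4)$. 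A brief chase with Schur's lemma shows that $\Tor^R_0(B_{3,2}) \to \Tor^R_0(C_{3,2})$ is the projection killing the degree-$0$ summand, and the connecting map $\Tor^R_1(C_{3,2}) \to \Tor^R_0(\bk[Y_{3,2}])$ vanishes for degree reasons (source in degree $3$, target in degree $0$). Hence $\Tor^R_1(B_{3,2}) \to \Tor^R_1(C_{3,2})$ is surjective, and since both sides are $\bS_{(5,2,2)}U$ it is an isomorphism. It follows that $\Tor^R_1(\bk[Y_{3,2}],\bk)$ is concentrated in degree~$4$ and equal to $\coker\bigl(\bS_{(5,5,2)}U \to \bS_{(7,3,2)}U \oplus \bS_{(5,5,2)}U\bigr)$.

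The last step pins down this cokernel; by Schur's lemma it is either the whole target or $\bS_{(7,3,2)}U$ alone. A dimension count decides: part (a) gives $\dim \bk[Y_{3,2}]_4 = \dim (B_{3,2})_4 = \dim \Sym^3(\Sym^4 U) = 680$, while $\dim R_4 = \dim \Sym^4(\Sym^3 U) = 715$, so $\dim (J_{3,2})_4 = 35 = \dim \bS_{(7,3,2)}U$. This rules out the first possibility and identifies the cokernel as $\bS_{(7,3,2)}U$, the Brill representation from Proposition~\ref{prop:guan}. Since $\Tor^R_1(\bk[Y_{3,2}],\bk)$ computes the minimal generators of $J_{3,2}$ as an ideal, this proves (b). The main technical hurdle is the Schur-lemma bookkeeping along the long exact sequence, but it stays tractable because only a few irreducible constituents appear in each relevant degree.
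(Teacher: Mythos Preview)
Your proof is correct and follows the same strategy as the paper's: read (a) off the resolution of $B_{3,2}$ in Example~\ref{thm:a_{33}} (the paper instead phrases this as checking that $\phi_{3,2,3}$ is an isomorphism via highest weight vectors, but your argument from the nonvanishing linear component of $d_1$ is equivalent and a bit more direct), and then prove (b) by comparing that resolution with the Koszul resolution of $C_{3,2}$ through the mapping cone/long exact $\Tor$ sequence, which reduces the question to whether $\bS_{(5,5,2)}U$ lies in $(J_{3,2})_4$. The only real difference is in that last step: the paper rules out $\bS_{(5,5,2)}U$ by noting that the Hessian covariant evaluated at $xyz$ is $2xyz \neq 0$, whereas you use the dimension count $\dim(J_{3,2})_4 = 715 - 680 = 35 = \dim \bS_{(7,3,2)}U$; both arguments are valid, and yours has the virtue of avoiding any explicit covariant computation.
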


\begin{proof}
  The first statement follows from the fact that the third graded component of $\phi_{3,2}$ is an isomorphism. This can be done by calculating the value of $\phi_{3,2}$ on highest weight vectors. Indeed, it is well known that (or by using software such as LiE \cite{LiE})
  \[
    \Sym^3(\Sym^3U)=\bS_{9}U\oplus \bS_{7,2}U\oplus \bS_{6,3}U\oplus \bS_{5,2,2}U\oplus \bS_{4,4,1}U.
  \]
  Since for $\dim(U)=2$ the analogous map $\phi_{3,1}$  is an isomorphism, we just need to show that $\phi_{3,2}$ applied to the highest weight vectors of $\bS_{5,2,2}U$ and of $\bS_{4,4,1}U$
  are not zero. We already mentioned that $\bS_{5,2,2} U$ is a Hessian covariant. The highest weight vector of $\bS_{4,4,1}U$ is the determinant
  \[
    \det\left(\begin{matrix}6a_{300}&2a_{210}&2a_{120}\\
  2a_{210}&2a_{120}&6a_{030}\\
  2a_{201}&a_{111}&2a_{021}\end{matrix}\right).
\]
  Using these formulas we can prove the result.
To prove the second statement we notice that the resolution of $C_{3,2}$ is the Koszul complex tensored with $(\det U)^{\otimes 2}$.
The mapping cone of the map of complexes lifting the map $B_{3,2}\rightarrow C_{3,2}$ is a non-minimal free resolution of the $R$-module $R/J_{3,2}$. This proves that $J_{3,2}$ is generated in degrees $\le 4$, more precisely by a subrepresentation of $\bS_{(5,5,2)}U\oplus \bS_{(7,3,2)}U$. Then it is enough to see that $\bS_{(5,5,2)}U$ (which occurs with multiplicity 1 in $\Sym^3(\Sym^3 U)$) does not vanish on $Y_{3,2}$. This is clear as when evaluating Hessian on the product $xyz$ we get a non-zero polynomial $2xyz$.
\end{proof}

\subsubsection{The case $d=n=3$}

Now we consider the case $\dim (U)=4$. 
We prove the following result, even stronger than Proposition~\ref{prop:notequal}.

\begin{corollary}
  In the case $d=n=3$ the ideals $J_{3,3}$ and $L_{3,3}$ do not define the same subscheme of $\operatorname{Proj} (R_{3,3})$, i.e., the saturations of $J_{3,3}$ and $L_{3,3}$ are not the same.
\end{corollary}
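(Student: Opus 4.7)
The plan is to show $L_{3,3}^{\mathrm{sat}} \subsetneq J_{3,3}$. Because $J_{3,3}$ is a prime ideal (being the defining ideal of the irreducible variety $Y_{3,3}$), it is saturated, so $L_{3,3}^{\mathrm{sat}} \subseteq J_{3,3}$ is automatic and only strictness is at issue. The general fact I would exploit is that for any homogeneous ideal $L \subseteq R_{3,3}$, the quotient $L^{\mathrm{sat}}/L = H^0_{\mathfrak{m}}(R_{3,3}/L)$ is a finite-dimensional $\bk$-vector space. Consequently, $L_{3,3}^{\mathrm{sat}} = J_{3,3}$ would force $\dim_\bk(J_{3,3}/L_{3,3}) < \infty$. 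Since Brill's equations already cut out $Y_{3,3}$ set-theoretically, the $R_{3,3}$-module $J_{3,3}/L_{3,3}$ is supported inside $Y_{3,3}$, and it is infinite-dimensional over $\bk$ precisely when its support equals the full $10$-dimensional $Y_{3,3}$, which in turn is equivalent to the localization $L_{3,3}R_{3,3,\mathfrak{p}}$ being properly contained in $\mathfrak{p} R_{3,3,\mathfrak{p}}$ at $\mathfrak{p}=J_{3,3}$.

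To verify this, I would work at a general smooth point of $Y_{3,3}$. Let $p = \ell_1\ell_2\ell_3 \in Y_{3,3}$ with $\ell_1,\ell_2,\ell_3 \in U^*$ chosen generally; then $\mathcal{O}_{X_{3,3},p}$ is regular and $J_{3,3}\mathcal{O}_{X_{3,3},p}$ has height $10 = \codim(Y_{3,3}, X_{3,3})$. By Nakayama's lemma, the condition $L_{3,3} R_{3,3,\mathfrak{p}} = \mathfrak{p} R_{3,3,\mathfrak{p}}$ is equivalent to the differentials of the Brill generators at $p$ spanning the entire $10$-dimensional conormal space $N^*_{Y_{3,3}/X_{3,3},p}$. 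Equivalently, the corollary is equivalent to the assertion that the Jacobian of the Brill equations at $p$ has rank strictly less than $10$; dually, one wants to exhibit a generator of the ideal $I_{3,3,3}$ from Proposition~\ref{prop:d=3} whose differential at $p$ contributes a conormal direction not spanned by the Brill equations.

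The main obstacle is the rank computation itself. Brill's equations span the single irreducible $\GL(U)$-representation $\bS_{(7,3,2)}U$, of dimension $875$ when $\dim U=4$, so a naive enumeration is unwieldy. The cleanest way to organize the computation is via the parametrization $\mu_{3,3}\colon Z_{3,3}\to Y_{3,3}$: a tangent direction at $p$ is encoded by a triple $(m_1,m_2,m_3) \in (U^*)^{\oplus 3}$, and the first-order variation of a pulled-back Brill polynomial along this direction is computed by ordinary partial differentiation in the coordinates $u_j^{(i)}$ of~\eqref{eq:def-Zdn}. The $\GL(U)\times\fS_3$-equivariance then reduces the problem to testing a small number of highest-weight vectors, but the finite and intricate verification of the inequality ``rank $<10$'' is most reliably certified with the aid of a computer algebra system such as \textsf{Macaulay2}, which simultaneously produces a specific generator of $I_{3,3,3}$ outside $L_{3,3}^{\mathrm{sat}}$.
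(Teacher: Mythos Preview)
Your argument has two problems. First, the equivalence you assert---that $J_{3,3}/L_{3,3}$ is infinite-dimensional \emph{precisely when} its support is all of $Y_{3,3}$---is false: the support could be any positive-dimensional $\GL(U)$-invariant closed subvariety of $Y_{3,3}$ (for instance the $8$-dimensional locus where the three linear factors are linearly dependent) and the quotient would still be infinite-dimensional. Thus showing that the Brill Jacobian has rank $<10$ at a generic point is \emph{sufficient} for the corollary but not \emph{equivalent} to it; you are attempting a strictly stronger statement, namely that $V(L_{3,3})$ fails to be even generically reduced along $Y_{3,3}$. Second, and more seriously, you never establish this stronger statement: you defer the rank computation to a computer algebra system, so as written there is no proof, only a strategy whose success is not guaranteed a priori.

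The paper's argument is entirely different and computation-free. One takes a highest weight vector $v$ of the copy of $\bS_{(6,2,2,2)}U$ lying inside the rank equations in $J_{3,3}$ (via the summand $\bS_{(5,1,1,1)}U \subset \bigwedge^4 \Sym^2 U$), together with the highest weight vector $w$ of $\Sym^3 U \subset R_{3,3}$. Then $vw^m$ is a highest weight vector of weight $(6+3m,2,2,2)$ for every $m\ge 0$. Since $L_{3,3}$ is generated by the irreducible $\bS_{(7,3,2)}U$, the Littlewood--Richardson rule forces every $\bS_\mu U$ occurring in $L_{3,3}$ to satisfy $\mu \supseteq (7,3,2)$; as $(6+3m,2,2,2)$ has second part $2<3$, it never contains $(7,3,2)$, so $vw^m \notin L_{3,3}$ for all $m$ and hence $v \in J_{3,3} \setminus L_{3,3}^{\mathrm{sat}}$.
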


\begin{proof}
  As discussed above, the ideal $I_{3,3,3}$ is generated in degree $4$ and the linear span of its generators  equals $\bigwedge^4 U \otimes\bigwedge^4 (\Sym^2U)\subset \Sym^4 (\Sym^3U)$. Since $\bigwedge^4 (\Sym^2U)$ contains $\bS_{(5,1,1,1)}U$, we can find a representation $\bS_{(6,2,2,2)}U$ in degree $4$ in $I_{3,3,3}$, and hence in $J_{3,3}$. We claim that this representation does not belong to the saturation of $L_{3,4}$. By Proposition~\ref{prop:guan}, the ideal $L_{3,4}$ is generated by $\bS_{(7,3,2)}U$.

Let $v$ be a highest weight vector in $\bS_{(6,2,2,2)}U$ and let $w$ be a highest weight vector in $\Sym^3 U$. If $\bS_{(6,2,2,2)}U$ is in the saturation of $L_{3,4}$, then $vw^m \in L_{3,4}$ for $m \gg 0$. But this is a highest weight vector for $\bS_{(6+3m,2,2,2)}U$. Since $(6+3m,2,2,2)$ does not contain the partition $(7,3,2)$, it cannot appear in any tensor product of the form $\bS_{(7,3,2)} U \otimes \bS_\lambda U$ as a consequence of the Littlewood--Richardson rule \cite[Theorem 2.3.4]{weyman}, which proves the claim.
\end{proof}

\subsection{The case $d=4$, $n=2$}

Finally we collect together some known results about the smallest case of degree $d=4$. Let $\dim U=3$.

\begin{proposition} 
\begin{enumerate}[\rm\indent (a)]
\item The $4$th graded component of the map $\phi_{4,2}$
  \[
    \phi_{4,2,4} \colon \Sym^4 (\Sym^4 U)\to \Sym^4( \Sym^4 U)
  \]
is an isomorphism.
\item The module $C_{4,2}$ has only two graded components:
  \[
    (C_{4,2})_2 = \bS_{(4,2,2)}U, \qquad (C_{4,2})_3= \bS_{(7,3,2)}U.
  \]
\item The Brill  ideal $L_{4,2}$ is generated by the representations
  \[
    \bS_{(14,4,2)}U \oplus \bS_{(13,4,3)}U\oplus \bS_{(12,4,4)}U.
  \]
\item The Brill  ideal $L_{4,2}$ is not radical. 
\end{enumerate}
 \end{proposition}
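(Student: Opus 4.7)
The plan is to deduce (d) as a direct consequence of (b) and (c) by comparing the two ideals in degree $5$. Since Brill's equations cut out $Y_{4,2}$ set-theoretically, the Nullstellensatz gives $\sqrt{L_{4,2}} = J_{4,2}$, so $L_{4,2}$ is radical if and only if $L_{4,2} = J_{4,2}$. It therefore suffices to exhibit a strict containment $(L_{4,2})_m \subsetneq (J_{4,2})_m$ in some single degree $m$; I will take $m = 5$.

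By part (c), $L_{4,2}$ is generated in degree $5$, so $(L_{4,2})_5 = \bS_{(14,4,2)}U \oplus \bS_{(13,4,3)}U \oplus \bS_{(12,4,4)}U$. Applying the Weyl dimension formula for $\GL_3$, the three summands have dimensions $231$, $120$, and $45$, giving $\dim (L_{4,2})_5 = 396$. On the other hand, $(J_{4,2})_5 = \ker \phi_{4,2,5}$, and by part (b) the cokernel $C_{4,2}$ is supported only in degrees $2$ and $3$, so $\phi_{4,2,5} \colon \Sym^5(\Sym^4 U) \to \Sym^4(\Sym^5 U)$ is surjective. A routine dimension count gives $\dim \Sym^5(\Sym^4 U) = 11628$ and $\dim \Sym^4(\Sym^5 U) = 10626$, whence $\dim (J_{4,2})_5 = 1002$.

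Since the Brill generators vanish on $Y_{4,2}$, we have $(L_{4,2})_5 \subseteq (J_{4,2})_5$, and the strict inequality $396 < 1002$ forces the containment to be proper. This gives $L_{4,2} \neq J_{4,2}$, so $L_{4,2}$ is not radical. No step here is really an obstacle: the heavy lifting is packaged inside parts (b) and (c), and the conclusion reduces to a clean dimension count. A natural refinement of this plan would be to decompose $(J_{4,2})_5$ into $\GL(U)$-irreducibles (for instance with \texttt{LiE}) and identify the specific Schur summands lying in $(J_{4,2})_5 \setminus (L_{4,2})_5$, thereby producing explicit polynomials witnessing the failure of radicality.
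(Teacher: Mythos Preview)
Your argument for (d) is correct and is in fact more informative than the paper's own proof, which simply cites the numerics in Briand \cite[\S 3]{briand2}. You instead derive (d) internally from (b) and (c) via a clean dimension comparison in degree $5$: since $J_{4,2}$ is prime (being the kernel of a map to a domain), the Nullstellensatz reduction $\sqrt{L_{4,2}}=J_{4,2}$ is valid, and the inequality $\dim(L_{4,2})_5=396<1002=\dim(J_{4,2})_5$ then forces $L_{4,2}\neq J_{4,2}$. The Weyl dimension computations and the dimensions of $\Sym^5(\Sym^4 U)$ and $\Sym^4(\Sym^5 U)$ all check out. What your approach buys is a self-contained deduction that makes transparent exactly where the failure of radicality occurs and how large the discrepancy is; the paper's approach simply appeals to an external computation. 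Note that your proposal only addresses part (d); parts (a)--(c) still require the inputs the paper invokes (the M\"uller--Neunh\"offer computation, the degree $2$ and $3$ highest weight vector calculations, and Guan's result), so your argument should be read as a replacement for the paper's treatment of (d) alone.
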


 \begin{proof}
   The first part follows from the computational verification from \cite{Muller}. The second part involves only calculations in degrees $2$ and $3$.
In degree $2$ the calculation is clear since all partitions in $\Sym^2(\Sym^4U)$ have at most two parts so they already appear in the $\dim(U)=2$ case.
The calculation in degree $3$ proceeds as follows. We calculate both domain and codomain using the computer program LiE \cite{LiE}.
We see that the only questionable representations are $\bS_{(8,2,2)}U$,  $\bS_{(7,4,1)}U$ and  $\bS_{(6,4,2)}U$.
We will describe the highest weight vectors corresponding to these representations.
Let us denote our form as
$$f(x,y,z)=\sum_{\alpha+\beta+\gamma=4} a_{\alpha\beta\gamma}x^\alpha y^\beta z^\gamma$$

Let us construct the highest weight vectors in our three representations.

The covariant $\bS_{(8,2,2)}U$ is the Hessian covariant $H(f(x,y,z))$ of the form $f(x,y,z)$, so the highest weight vector is just the coefficient of $x^6$ of the Hessian.
The highest weight vector from $\bS_{(7,4,1)}U$ can be constructed as follows.
The embedding of $\bS_{(7,4,1)}U$ into $\Sym^3 (\Sym^4 U)$ is a composition of the embedding of $\bS_{(6,3,0)}U$ into $\bigwedge^3 (\Sym^3 U)$ tensored with $\bigwedge^3 U$ composed with the map
\[
  \bigwedge^3U\otimes\bigwedge^3(\Sym^3U)\rightarrow \Sym^3(\Sym^4 U)
\]
embedding by $3\times 3$ minors into $\Sym^3(U\otimes \Sym^3U)$ and then multiplying $U\otimes \Sym^3U$ into $\Sym^4U$. The highest weight vector $\bS_{(6,3,0)} U$ in $\bigwedge^3 (\Sym^3 U)$ is just $e_1^3\wedge e_1^2e_2\wedge e_1e_2^2$. Combining this information we get our highest weight vector.
The highest weight vector of $\bS_{(6,4,2)}U$ is constructed similarly, by tensoring the highest weight vector of $\bS_{(4,2,0)}U$ in $\Sym^3(\Sym^2 U)$ with $\bigwedge^3 U$ twice.

 Now it is not difficult to see that the map $\phi_{4,2,4}$ takes all three highest weight vectors  to nonzero elements.
 
The third part is a special case of Proposition \ref{prop:guan}. For the last statement, see the numerics in  \cite[\S 3]{briand2}. 
\end{proof}

\subsection{The case $d=4, n=3$}
Let us analyze this case. We have the following calculations that can be done via computer programs.

\begin{proposition}
  Let $\dim U=4$. 
\begin{enumerate}[\rm \indent (1)]
\item The $4$-th graded component of the map
  \[
    \phi_{4,3,4} \colon \Sym^4 (\Sym^4 U)\to \Sym^4 (\Sym^4 U)
  \]
is surjective.
\item The module $C_{4,3}$ has only two graded components:
  \begin{align*}
    (C_{4,3})_2 &= \bS_{(4,2,2)}U\oplus \bS_{(2,2,2,2)}U,\\
    (C_{4,3})_3 &= \bS_{(7,3,2)}U\oplus \bS_{(5,4,2,1)}U\oplus \bS_{(6,2,2,2)}U.
  \end{align*}
\item The Brill ideal $L_{4,3}$ is generated by the representations
  \[
    \bS_{(14,4,2)}U\oplus \bS_{(13,4,3)}U\oplus \bS_{(12,4,4)}U.
  \]
\item The ideal $L_{4,3}+I_{4,3,4}$ is not radical.
\end{enumerate}
 \end{proposition}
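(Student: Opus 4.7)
The four assertions follow patterns already established for $(d,n)=(3,3)$ and $(d,n)=(4,2)$. Parts~(1) and~(3) are essentially computational. For~(1), one decomposes $\Sym^4(\Sym^4 U)$ as a $\GL(U)$-representation with $\dim U = 4$ using LiE~\cite{LiE} (or similar), and verifies by evaluating explicit highest-weight vectors---of Hessian type and of the determinantal type constructed in Example~\ref{thm:a_{33}} and in the proof of the $(d,n)=(4,2)$ proposition---that none is annihilated by $\phi_{4,3,4}$; since source and target have equal dimension at $m=d$ (Hermite reciprocity), surjectivity is equivalent to bijectivity. Part~(3) is immediate from Proposition~\ref{prop:guan} specialized to $d=4$, yielding $\bS_{(14,4,2)}U \oplus \bS_{(13,4,3)}U \oplus \bS_{(12,4,4)}U$.

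For~(2), the plan is to decompose $\Sym^m(\Sym^4 U)$ and $\rD^4(\Sym^m U)$ for $m=2,3$ and identify the cokernel of $\phi_{4,3,m}$ by matching Schur components, exhibiting explicit preimages for every summand other than the ones listed. To conclude $(C_{4,3})_m = 0$ for $m\ge 4$, one combines Proposition~\ref{prop:finite-over-Ydn} and Remark~\ref{rem:normality} (so $C_{4,3}$ has finite length in characteristic zero) with the surjectivity from part~(1): once $\phi_{4,3,m}$ surjects in the finitely many degrees in which $B_{4,3}$ is generated over $\bk[Y_{4,3}]$, surjectivity propagates to all higher degrees via the algebra structure, and Brion's effective bound~\cite{brion-french} provides an a priori cap on the range of degrees to check. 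Direct verification at the finitely many remaining degrees then completes the argument.

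Part~(4) is the substantive point and proceeds in the spirit of the proof after Proposition~\ref{prop:d=3} that the saturations of $J_{3,3}$ and $L_{3,3}$ differ. The plan is to exhibit a Schur functor $\bS_\mu U$ that lies in the radical of $L_{4,3}+I_{4,3,4}$ but not in the saturation of the ideal itself. The generators of $L_{4,3}$ are the three length-$3$ partitions $(14,4,2)$, $(13,4,3)$, $(12,4,4)$, while $I_{4,3,4}$ is generated by the Schur summands of $\det U \otimes \bigwedge^4(\Sym^3 U)$, which are all of length~$4$. If $\bS_\mu U$ lies in the saturation, then for a highest-weight vector $v$ of $\bS_\mu U$ and the highest-weight vector $w$ of $\Sym^4 U$, one has $vw^k \in L_{4,3}+I_{4,3,4}$ for $k\gg 0$; by Pieri and the Littlewood--Richardson rule, $vw^k$ spans among other things the top component $\bS_{\mu + k(4,0,0,0)}U$, so this Schur functor must be reachable by tensoring one of the generating partitions with some auxiliary Schur component. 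The main obstacle is to locate a $\mu$ for which the shape constraints imposed by the LR rule, together with the fixed length-$3$ profile of the $L_{4,3}$ generators and the length-$4$ profile of the $I_{4,3,4}$ generators, fail to produce $\mu + k(4,0,0,0)$ for any $k$; natural candidates are sought among low-degree Schur components of $I_{4,3,4}$ of length~$4$ whose second part is too small to fit beneath any of the generating partitions after a horizontal strip addition, and verified computationally, analogous to the numerical analysis of~\cite{briand2} invoked for $(d,n)=(4,2)$.
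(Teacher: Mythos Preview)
Your outline for (1)--(3) is serviceable but more laborious than the paper's. For (1) the paper simply cites the computational verification of M\"uller--Neunh\"offer \cite{Muller} rather than checking highest-weight vectors by hand; for (2) it observes (via LiE) that every length-$4$ partition occurring in the codomain $\Sym^4(\Sym^3 U)$ fails to occur in the domain $\Sym^3(\Sym^4 U)$, so the degree-$3$ cokernel is the $(4,2)$ answer together with exactly those new length-$4$ summands, and no fresh highest-weight computations are needed. Your remark that surjectivity ``propagates via the algebra structure once $\phi$ surjects in the degrees where $B$ is generated over $\bk[Y]$'' is circular---those degrees are precisely the support of $C$, which is what you are computing---though your fallback to Brion's bound plus finite direct checking would go through.

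Part (4), however, has a genuine gap. You have imported the \emph{saturation} template from the Corollary after the $(3,3)$ discussion, but that result is about saturations, not radicals, and your plan becomes incoherent at the decisive step: you propose to seek the witness $\bS_\mu U$ ``among low-degree Schur components of $I_{4,3,4}$,'' but any such $\bS_\mu U$ is already in the ideal $L_{4,3}+I_{4,3,4}$ and hence in its saturation, so it cannot witness non-radicality. You also never explain how a candidate $\bS_\mu U$ would be certified to lie in the radical, which amounts to knowing the ideal of $Y_{4,3}\cap D_{4,3,3}$. The paper's argument is instead a reduction in the style of Proposition~\ref{prop:d=3}: since $I_{4,3,4}$ consists exactly of the Schur functors with $\ell(\lambda)\ge 4$, working modulo $I_{4,3,4}$ leaves only length-$\le 3$ representations, and the question of whether $L_{4,3}+I_{4,3,4}$ equals its radical can therefore be tested with $\dim U = 3$. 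There it becomes the question of whether $L_{4,2}$ is radical, already answered negatively (via Briand's numerics) in the $(4,2)$ proposition.
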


\begin{proof} The first part follows from the computational verification from \cite{Muller}. The second part involves only calculations in degrees $2$ and $3$. In degree $2$ the calculation is clear since all partitions appearing in $\Sym^2(\Sym^4U)$  already appear for $\dim(U)=2$. In degree $3$ we calculate both domain and codomain by the computer program LiE \cite{LiE}. We see that in $\Sym^4(\Sym^3 U)$ we get only either partitions with $3$ parts, or the partitions that do not occur in $\Sym^3(\Sym^4U)$. So the calculation follows from the $d=4$, $n=2$ case.  The third is a special case is part of Proposition~\ref{prop:guan}. The last statement is part of Proposition~\ref{prop:d=3}. 
\end{proof}

\section{Hermite action}\label{sec:Hermite}

We return to the general situation of a vector space $U$ of arbitrary dimension $n+1$ over an arbitrary algebraically closed field $\bk$. Consider the action of $\fS_d$ on $A_{d,n}$, which recall is a finitely generated module over $B_{d,n} = \bigoplus_{m \ge 0} \rD^d(\Sym^m U)$ (which itself is a finitely generated module over $\Sym(\rD^d U)$). We have submodules
\[
  \bM(U)_{i,d-i} = \bigoplus_{m \ge 0} \bigwedge^i(\Sym^m U) \otimes (\Sym^m U)^{\otimes d-i}
\]
of $A_{d,n}$ which are again finitely generated $B_{d,n}$-modules. Multiplication by $\rD^d U$ is the composition of the following maps (where $\Delta$ is comultiplication for divided powers)
\begin{align*}
  \rD^d U \otimes \bigwedge^i(\Sym^n U) \otimes (\Sym^n U)^{\otimes d-i} &\xrightarrow{\Delta \otimes 1 \otimes 1} \rD^i U \otimes U^{\otimes d-i} \otimes \bigwedge^i(\Sym^n U) \otimes (\Sym^n U)^{\otimes d-i} \\
  &\to \bigwedge^i(U \otimes \Sym^n U) \otimes (\Sym^{n+1} U)^{\otimes d-i} \\
  &\to \bigwedge^i(\Sym^{n+1} U) \otimes (\Sym^{n+1} U)^{\otimes d-i}.
\end{align*}

In particular, if $R$ is a graded quotient ring of $\Sym(U)$, we get a finitely generated quotient module
\[
  \bM(U,R)_{i,d-i} = \bigoplus_{n \ge 0} \bigwedge^i(R_n) \otimes R_{(d-i)n}.
\]

\begin{proposition}
  The following diagram commutes, where the top and bottom left horizontal maps are the comultiplication map on the $i$-th exterior power, the middle left map is the tensor product of the comultiplication map on the $i$-th divided power tensored with the comultiplication map on the $i$-th exterior power, the right horizontal maps are multiplication in $R$, the top vertical maps are divided power comultiplication, and the bottom vertical maps are the components of the map described above:
 \[
  \xymatrix{
    \rD^d U \otimes \bigwedge^i(R_n) \otimes R_{(d-i)n} \ar[r] \ar[d] &
    {\begin{array}{c} \rD^d U \otimes \bigwedge^{i-1}(R_n) \\ \otimes R_n \otimes R_{(d-i)n}
     \end{array}}
     \ar[r] \ar[d] &    
    \rD^d U \otimes \bigwedge^{i-1}(R_n) \otimes R_{(d-i+1)n} \ar[d] \\
{\begin{array}{c}    \rD^i U \otimes U^{\otimes d-i} \otimes\\ \bigwedge^i(R_n) \otimes R_{(d-i)n}
 \end{array}}\ar[r] \ar[d] &
{\begin{array}{c}    \rD^{i-1} U \otimes U \otimes U^{\otimes d-i} \otimes\\ \bigwedge^{i-1}(R_n) \otimes R_n \otimes R_{(d-i)n}
 \end{array}}
 \ar[r] \ar[d] &
{\begin{array}{c}    \rD^{i-1} U \otimes U \otimes U^{\otimes d-i}  \otimes\\ \bigwedge^{i-1}(R_n) \otimes R_{(d-i+1)n}
 \end{array}
 }\ar[d] \\
    \bigwedge^i(R_{n+1}) \otimes R_{(d-i)(n+1)} \ar[r] &
{\begin{array}{c}    \bigwedge^{i-1}(R_{n+1}) \otimes R_{n+1}\\ \otimes R_{(d-i)(n+1)}
 \end{array}}
\ar[r] &    
    \bigwedge^{i-1}(R_{n+1}) \otimes R_{(d-i+1)(n+1)} 
    }
\]
\end{proposition}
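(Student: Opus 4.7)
The plan is to verify the commutativity of the two sub-squares on the left (concerning the comultiplications) and the two on the right (concerning the multiplications) separately, each by tracing pure tensors through the diagram.

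For the left squares, the horizontal arrows apply the exterior comultiplication $\bigwedge^i R_n \to \bigwedge^{i-1} R_n \otimes R_n$ (in the top and bottom rows) and the combined divided-power/exterior comultiplication (in the middle row), while the vertical arrows comultiply the divided power $\rD^d U$. The commutativity of the top-left square is immediate: the exterior and divided-power comultiplications act on disjoint tensor factors, and the two-step comultiplication $\rD^d U \to \rD^i U \otimes U^{\otimes d-i} \to \rD^{i-1}U \otimes U \otimes U^{\otimes d-i}$ equals the direct one by coassociativity of the divided-power comultiplication. The bottom-left square is the more substantive assertion: it says that the action of $\rD^i U$ on $\bigwedge^i R_n$ is compatible with splitting off one wedge factor. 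This holds because the action was defined precisely by embedding $\rD^i U \hookrightarrow U^{\otimes i}$ as symmetric tensors, acting componentwise on $R_n^{\otimes i}$, and antisymmetrizing to land in $\bigwedge^i R_{n+1}$; this description coincides with the inductive one obtained by comultiplying $\rD^i U \to \rD^{i-1} U \otimes U$ and letting the extracted $U$ factor multiply into the split-off $R_n$.

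For the right squares, the horizontal arrows are the multiplications $R_n \otimes R_{(d-i)n} \to R_{(d-i+1)n}$ and $R_{n+1} \otimes R_{(d-i)(n+1)} \to R_{(d-i+1)(n+1)}$, and the vertical arrows apply the action. The $\rD^{i-1} U \otimes \bigwedge^{i-1}(R_n)$ part is untouched by the horizontal arrows and so contributes identically on both paths. On the remaining $U \otimes U^{\otimes d-i} \otimes R_n \otimes R_{(d-i)n}$ portion, one path first multiplies $R_n \otimes R_{(d-i)n} \to R_{(d-i+1)n}$ and then iteratively multiplies all $d-i+1$ copies of $U$ into the result, while the other path first multiplies $U \otimes R_n \to R_{n+1}$ and (iteratively) $U^{\otimes d-i} \otimes R_{(d-i)n} \to R_{(d-i)(n+1)}$, then combines by the multiplication in $R$. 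Both yield the element $u_1 \cdots u_{d-i+1} \cdot r_1 \cdot r_2 \in R_{(d-i+1)(n+1)}$, by associativity and commutativity of $R$.

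The main obstacle is purely bookkeeping: one must fix sign conventions so that the exterior comultiplication $\bigwedge^i V \to \bigwedge^{i-1} V \otimes V$, $v_1 \wedge \cdots \wedge v_i \mapsto \sum_{j} (-1)^{i-j}\, v_1 \wedge \cdots \widehat{v_j} \cdots \wedge v_i \otimes v_j$, is aligned with the chosen ordering of tensor factors in the iterated divided-power comultiplication, and then verify each square on a pure tensor $u^{(d)} \otimes (r_1 \wedge \cdots \wedge r_i) \otimes s$. Once the conventions are settled, each square reduces to a short symbolic computation invoking only coassociativity of $\rD^{\bullet} U$, the comultiplication-multiplication compatibility in $\bigwedge^{\bullet} V$, and the commutative-algebra axioms for $R$.
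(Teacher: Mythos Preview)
Your proposal is correct and follows essentially the same approach as the paper: verify each of the four sub-squares separately, invoking coassociativity of the divided-power comultiplication, the definition of the action, and associativity/commutativity in $R$. The paper's own proof is a three-sentence sketch using exactly these ingredients; your version supplies considerably more detail.

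One minor imprecision: when you treat ``the right squares'' together and say ``the vertical arrows apply the action,'' this is only accurate for the bottom-right square. In the top-right square the vertical arrows are the divided-power comultiplication $\rD^d U \to \rD^{i-1}U \otimes U \otimes U^{\otimes d-i}$, not the action. Your subsequent argument (isolating the $U \otimes U^{\otimes d-i} \otimes R_n \otimes R_{(d-i)n}$ factors) really addresses only the bottom-right square. The top-right square commutes for the reason the paper gives --- the horizontal multiplication in $R$ and the vertical comultiplication on $\rD^d U$ touch disjoint tensor factors --- and you should state this explicitly rather than folding it into the action discussion.
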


\begin{proof}
 The top left square commutes by coassociativity of comultiplication for divided powers. The top right square commutes since the compositions are tensor products of maps that do not interact. The bottom two squares commute by definition of the action.
\end{proof}

\begin{corollary}
  Let $R$ be a graded quotient of $\Sym(U)$. Let $R[n]$ be the $n$th Veronese subring of $R$. Then for each $i,d \in \bZ$,
  \[
    \bigoplus_{n \ge 0} \Tor_i^{\Sym(R_n)}(R[n], \bk)_d
  \]
  is a finitely generated $\Sym(\rD^d U)$-module which is supported on the Chow variety. In particular, the dimension of the Tor group is eventually a polynomial in $n$ of degree $\le (\dim U-1)d$.
\end{corollary}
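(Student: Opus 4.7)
The plan is to realize the direct sum as the homology of a natural chain complex of $\Sym(\rD^d U)$-modules whose terms are the modules $\bM(U,R)_{\bullet,\bullet}$, and then invoke the finite generation of $\bM(U)_{i,d-i}$ together with the standard relation between Hilbert growth and support.

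First, I would express $\Tor_i^{\Sym(R_n)}(R[n],\bk)_d$ as Koszul homology. Since $R[n]$ is a graded algebra over $\Sym(R_n)$ with $R[n]_1 = R_n$ placed in internal degree one, tensoring the Koszul resolution of $\bk$ over $\Sym(R_n)$ with $R[n]$ yields a complex whose internal-degree-$d$ piece is
\[ \cdots \to \wedge^i(R_n) \otimes R_{(d-i)n} \to \wedge^{i-1}(R_n) \otimes R_{(d-i+1)n} \to \cdots, \]
whose homology at position $i$ equals $\Tor_i^{\Sym(R_n)}(R[n],\bk)_d$. The term in position $i$ is precisely the $n$-th graded component of $\bM(U,R)_{i,d-i}$. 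The commuting diagram in the preceding proposition says exactly that the Hermite action of $\rD^d U$, which sends degree $n$ to degree $n+1$, intertwines the Koszul differential. Summing over all $n$ therefore produces a chain complex of $\Sym(\rD^d U)$-modules
\[ \cdots \to \bM(U,R)_{i,d-i} \to \bM(U,R)_{i-1,d-i+1} \to \cdots \]
whose homology in position $i$ is the direct sum appearing in the corollary.

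Next, for finite generation and support I would compare with $\bM(U)_{i,d-i}$. The surjection $\Sym^n U \twoheadrightarrow R_n$ induces a surjection $\bM(U)_{i,d-i} \twoheadrightarrow \bM(U,R)_{i,d-i}$ of $\Sym(\rD^d U)$-modules. Since $\bM(U)_{i,d-i}$ is a $\Sym(\rD^d U)$-submodule of $A_{d,\dim U - 1}$, Proposition~\ref{prop:finite-over-Ydn} implies that it is finitely generated over $\bk[Y_{d,\dim U - 1}]$, hence over $\Sym(\rD^d U)$. Noetherianity then forces both the quotient $\bM(U,R)_{i,d-i}$ and its subquotient (the Tor module) to be finitely generated as well; moreover, both are supported inside $Y_{d,\dim U - 1}$ since support is inherited by subquotients.

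Finally, the polynomial growth bound follows from the classical fact that a finitely generated graded module $M$ over a polynomial ring has $\dim_\bk M_n$ eventually polynomial of degree $\dim\supp(M) - 1$. The affine dimension of $Y_{d,\dim U - 1}$ is $d(\dim U - 1) + 1$, which gives the stated bound of $(\dim U - 1)d$. The main conceptual input is the compatibility of the Hermite action with the Koszul differential, which is already furnished by the diagram in the preceding proposition; everything else is bookkeeping using the finite generation results from Section~\ref{s:char-free}.
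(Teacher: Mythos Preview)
Your proposal is correct and follows the same approach as the paper's proof: the paper's two-sentence argument (``This is the homology of a Koszul complex, and the previous result shows that this Koszul complex is compatible with the $\Sym(\rD^d U)$-module structure'') is exactly your first two paragraphs, and your remaining paragraphs spell out the finite generation, support, and Hilbert-polynomial steps that the paper leaves implicit (the finite generation of $\bM(U,R)_{i,d-i}$ having been asserted in the text preceding the Proposition).
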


The polynomiality statement generalizes \cite[Theorem 4]{yang}, which was proved for coordinate rings of smooth projective varieties in characteristic zero.

\begin{proof}
  This is the homology of a Koszul complex, and the previous result shows that this Koszul complex is compatible with the $\Sym(\rD^d U)$-module structure.
\end{proof}

\begin{remark}
  There are a number of different ways that this result can be generalized without much extra effort.

  \begin{enumerate}
  \item Let $M$ be a finitely generated graded $R$-module. We can replace $R[n]$ by $M[n]$. Furthermore, if $m$ is a fixed integer, we can replace $M[n]$ by the shifted module $\bigoplus_{r \ge 0} M_{rn+m}$.
  \item We can replace $\bk$ by some other module whose resolution is linear and given by Schur functors in some fixed way. For example, for fixed $e$, we can replace it by the quotient of $\Sym(R_n)$ by the $e$th power of the maximal ideal. Then the exterior powers are replaced by hook shapes (we might need characteristic 0 for this to work).
    
  \item We can work with multigraded rings. \qedhere
  \end{enumerate}
\end{remark}

\begin{remark}
Let $\bk$ be a field of characteristic 0 and take $R = \Sym(U)$. Then the module above is a twisted commutative algebra over $\Sym(\rD^d)$ (see \cite{expos}). In fact, it's automatically finitely generated: every Schur functor $\bS_\lambda$ that appears satisfies $\ell(\lambda) \le d$, so all of the information can be detected with a vector space of dimension $d$, i.e., this twisted commutative algebra is bounded \cite[Proposition 9.1.6]{expos}.
\end{remark}

\begin{bibdiv}
    \begin{biblist}

\bib{aronhold}{article}{
   author={Aronhold, S.},
   title={Zur Theorie der homogenen Functionen dritten Grades von drei
   Variabeln},
   language={German},
   journal={J. Reine Angew. Math.},
   volume={39},
   date={1850},
   pages={140--159},
}

\bib{boutot}{article}{
   author={Boutot, Jean-Fran\c{c}ois},
   title={Singularit\'{e}s rationnelles et quotients par les groupes r\'{e}ductifs},
   language={French},
   journal={Invent. Math.},
   volume={88},
   date={1987},
   number={1},
   pages={65--68},
}

\bib{briand}{article}{
   author={Briand, Emmanuel},
   title={When is the algebra of multisymmetric polynomials generated by the
   elementary multisymmetric polynomials?},
   journal={Beitr\"{a}ge Algebra Geom.},
   volume={45},
   date={2004},
   number={2},
   pages={353--368},
}

\bibitem[Bd1]{briand2} Emmanuel Briand, Brill's equations of the subvariety of the products of linear forms, Actas de los IX Encuentros de Algebra Computacional y Apliccationes, EACA 2004 (Santander), 59--63.

\bib{brill1}{article}{
   author={Brill, A.},
   title={Ueber symmetrische Functionen von Variabelnpaaren},
   language={German},
   journal={Nachrichten von der K\"onigl. Gesellschaft der Wissenschaften und der Georg-Augusts-Universit\"at zu G\"ottingen},
   volume={20},
   date={1893},
   pages={757--762},
}

\bib{brill2}{article}{
   author={Brill, A.},
   title={Ueber die Zerf\"{a}llung einer Tern\"{a}rform in Linearfactoren},
   language={German},
   journal={Math. Ann.},
   volume={50},
   date={1898},
   number={2-3},
   pages={157--182},
}

\bib{brion}{article}{
   author={Brion, Michel},
   title={Stable properties of plethysm: on two conjectures of Foulkes},
   journal={Manuscripta Math.},
   volume={80},
   date={1993},
   number={4},
   pages={347--371},
}

\bib{brion-french}{article}{
   author={Brion, Michel},
   title={Sur certains modules gradu\'{e}s associ\'{e}s aux produits sym\'{e}triques},
   language={French, with English and French summaries},
   conference={
      title={Alg\`ebre non commutative, groupes quantiques et invariants},
      address={Reims},
      date={1995},
   },
   book={
      series={S\'{e}min. Congr.},
      volume={2},
      publisher={Soc. Math. France, Paris},
   },
   date={1997},
   pages={157--183},
}

\bib{carlitz}{article}{
   author={Carlitz, L.},
   title={A combinatorial property of $q$-Eulerian numbers},
   journal={Amer. Math. Monthly},
   volume={82},
   date={1975},
   pages={51--54},
}

\bib{CEKS}{article}{
   author={Cheng, Szu-En},
   author={Elizalde, Sergi},
   author={Kasraoui, Anisse},
   author={Sagan, Bruce E.},
   title={Inversion polynomials for 321-avoiding permutations},
   journal={Discrete Math.},
   volume={313},
   date={2013},
   number={22},
   pages={2552--2565},
}

\bib{cheung}{article}{
   author={Cheung, Man-Wai},
   author={Ikenmeyer, Christian},
   author={Mkrtchyan, Sevak},
   title={Symmetrizing tableaux and the 5th case of the Foulkes conjecture},
   journal={J. Symbolic Comput.},
   volume={80},
   date={2017},
   number={part 3},
   part={part 3},
   pages={833--843},
 }

\bib{dalbec}{article}{
   author={Dalbec, John},
   title={Multisymmetric functions},
   journal={Beitr\"{a}ge Algebra Geom.},
   volume={40},
   date={1999},
   number={1},
   pages={27--51},
}

\bib{ES-Boij}{article}{
   author={Eisenbud, David},
   author={Schreyer, Frank-Olaf},
   title={Betti numbers of graded modules and cohomology of vector bundles},
   journal={J. Amer. Math. Soc.},
   volume={22},
   date={2009},
   number={3},
   pages={859--888},
}

\bib{foulkes}{article}{
   author={Foulkes, H. O.},
   title={Concomitants of the quintic and sextic up to degree four in the
   coefficients of the ground form},
   journal={J. London Math. Soc.},
   volume={25},
   date={1950},
   pages={205--209},
}

\bib{ful-har}{book}{
   author={Fulton, William},
   author={Harris, Joe},
   title={Representation theory},
   series={Graduate Texts in Mathematics},
   volume={129},
   note={A first course;
   Readings in Mathematics},
   publisher={Springer-Verlag, New York},
   date={1991},
   pages={xvi+551},
}

\bib{gkz}{book}{
   author={Gelfand, I. M.},
   author={Kapranov, M. M.},
   author={Zelevinsky, A. V.},
   title={Discriminants, resultants and multidimensional determinants},
   series={Modern Birkh\"{a}user Classics},
   note={Reprint of the 1994 edition},
   publisher={Birkh\"{a}user Boston, Inc., Boston, MA},
   date={2008},
   pages={x+523},
}

\bib{GZ}{article}{
   author={Gessel, Ira M.},
   author={Zhuang, Yan},
   title={Plethystic formulas for permutation enumeration},
   journal={Adv. Math.},
   volume={375},
   date={2020},
   pages={107370, 55},
}

\bib{gos}{article}{
   author={Goodman, Frederick M.},
   author={O'Hara, Kathleen M.},
   author={Stanton, Dennis},
   title={A unimodality identity for a Schur function},
   journal={J. Combin. Theory Ser. A},
   volume={60},
   date={1992},
   number={1},
   pages={143--146},
}

\bib{gordan}{article}{
   author={Gordan, P.},
   title={Das Zerfallen der Curven in gerade Linien},
   language={German},
   journal={Math. Ann.},
   volume={45},
   date={1894},
   number={3},
   pages={410--427},
}

\bib{guan}{article}{
   author={Guan, Yonghui},
   title={Brill's equations as a $GL(V)$-module},
   journal={Linear Algebra Appl.},
   volume={548},
   date={2018},
   pages={273--292},
}

\bib{howe}{article}{
   author={Howe, Roger},
   title={$({\rm GL}_n,{\rm GL}_m)$-duality and symmetric plethysm},
   journal={Proc. Indian Acad. Sci. Math. Sci.},
   volume={97},
   date={1987},
   number={1-3},
   pages={85--109 (1988)},
}

\bib{keith}{article}{
   author={Keith, William J.},
   title={Families of major index distributions: closed forms and
   unimodality},
   journal={Electron. J. Combin.},
   volume={26},
   date={2019},
   number={3},
   pages={Paper No. 3.58, 22},
}

\bib{KR}{article}{
   author={Kirillov, A. N.},
   author={Reshetikhin, N. Yu.},
   title={The Bethe ansatz and the combinatorics of Young tableaux},
   language={Russian, with English summary},
   journal={Zap. Nauchn. Sem. Leningrad. Otdel. Mat. Inst. Steklov.
   (LOMI)},
   volume={155},
   date={1986},
   number={Differentsial'naya Geometriya, Gruppy Li i Mekh. VIII},
   pages={65--115, 194},
   translation={
      journal={J. Soviet Math.},
      volume={41},
      date={1988},
      number={2},
      pages={925--955},
   },
}

\bib{Landsberg}{article}{
   author={Landsberg, J. M.},
   title={Geometric complexity theory: an introduction for geometers},
   journal={Ann. Univ. Ferrara Sez. VII Sci. Mat.},
   volume={61},
   date={2015},
   number={1},
   pages={65--117},
 }

 \bibitem[LiE]{LiE} Marc A. A. van Leeuwen, Arjeh M. Cohen, Bert Lisser, “LiE, a package for Lie group computations”
version 2.2.2, \url{http://www-math.univ-poitiers.fr/~maavl/LiE/}.

\bib{macdonald}{book}{
   author={Macdonald, I. G.},
   title={Symmetric functions and Hall polynomials},
   series={Oxford Classic Texts in the Physical Sciences},
   edition={2},
   note={With contribution by A. V. Zelevinsky and a foreword by Richard
   Stanley; Reprint of the 2008 paperback edition},
   publisher={The Clarendon Press, Oxford University Press, New York},
   date={2015},
   pages={xii+475},
}

\bib{macmahon}{book}{
   author={MacMahon, Percy A.},
   title={Combinatory analysis},
   series={Two volumes (bound as one)},
   publisher={Chelsea Publishing Co., New York},
   date={1960},
   pages={xix+302+xix+340},
}

\bib{Muller}{article}{
   author={M\"{u}ller, Jurgen},
   author={Neunh\"{o}ffer, Max},
   title={Some computations regarding Foulkes' conjecture},
   journal={Experiment. Math.},
   volume={14},
   date={2005},
   number={3},
   pages={277--283},
}

\bib{nag}{article}{
   author={Nagata, Masayoshi},
   title={On the normality of the Chow variety of positive $0$-cycles of
   degree $m$ in an algebraic variety},
   journal={Mem. Coll. Sci. Univ. Kyoto Ser. A. Math.},
   volume={29},
   date={1955},
   pages={165--176},
}

\bib{neeman}{article}{
   author={Neeman, Amnon},
   title={Zero cycles in ${\bf P}^n$},
   journal={Adv. Math.},
   volume={89},
   date={1991},
   number={2},
   pages={217--227},
}

\bib{noether}{article}{
   author={Noether, Emmy},
   title={Der Endlichkeitssatz der Invarianten endlicher Gruppen},
   language={German},
   journal={Math. Ann.},
   volume={77},
   date={1915},
   number={1},
   pages={89--92},
}

\bib{ottaviani}{article}{
   author={Ottaviani, Giorgio},
   title={An invariant regarding Waring's problem for cubic polynomials},
   journal={Nagoya Math. J.},
   volume={193},
   date={2009},
   pages={95--110},
   issn={0027-7630},
}

\bib{porras}{article}{
   author={Porras, Olga},
   title={Rank varieties and their resolutions},
   journal={J. Algebra},
   volume={186},
   date={1996},
   number={3},
   pages={677--723},
}

\bib{rydh}{article}{
   author={Rydh, David},
   title={A minimal set of generators for the ring of multisymmetric
   functions},
   language={English, with English and French summaries},
   journal={Ann. Inst. Fourier (Grenoble)},
   volume={57},
   date={2007},
   number={6},
   pages={1741--1769},
}

\bib{expos}{article}{
  author={Sam, Steven V},
  author={Snowden, Andrew},
  title={Introduction to twisted commutative algebras},
  date={2012},
  note={\arxiv{1209.5122v1}}
}

\bib{schlafli}{article}{
	author = {Schl\"afli, Ludwig},
	title = {\"Uber die Resultante eines Systemes mehrerer algebraischer Gleichungen. Ein Beitrag zur Theorie der Elimination},
	journal = {Denkschriften der Kaiserlichen Akademie der Wissenschaften / Mathematisch-Naturwissenschaftliche Classe},
	volume = {4},
	publisher = {Wien : Aus der Kaiserlich-K\"oniglichen Hof- und Staatsdruckerei,1850-1918.},
	year = {1852},
	pages = {1--74},
}

\bib{EC2}{book}{
   author={Stanley, Richard P.},
   title={Enumerative combinatorics. Vol. 2},
   series={Cambridge Studies in Advanced Mathematics},
   volume={62},
   note={With a foreword by Gian-Carlo Rota and appendix 1 by Sergey Fomin},
   publisher={Cambridge University Press, Cambridge},
   date={1999},
   pages={xii+581},
}

\bib{weyl}{book}{
   author={Weyl, Hermann},
   title={The Classical Groups. Their Invariants and Representations},
   publisher={Princeton University Press, Princeton, N.J.},
   date={1939},
   pages={xii+302},
}

\bib{weyman}{book}{
   author={Weyman, Jerzy},
   title={Cohomology of Vector Bundles and Syzygies},
   series={Cambridge Tracts in Mathematics},
   volume={149},
   publisher={Cambridge University Press, Cambridge},
   date={2003},
   pages={xiv+371},
}

\bib{yang}{article}{
   author={Yang, David H.},
   title={$S_n$-equivariant sheaves and Koszul cohomology},
   journal={Res. Math. Sci.},
   volume={1},
   date={2014},
   pages={Art. 10, 6},
}

    \end{biblist}
\end{bibdiv}

\end{document}